\newcommand{\blank}{\text{\textendash}}
\newcommand{\defterm}[1]{\emph{#1}}
\newcommand{\isoto}{\xrightarrow{\sim}}
\newcommand{\IFF}{if and only if}
\newcommand{\catname}[1]{\ensuremath{\text{\textup{#1}}}}
\newcommand{\txt}[1]{\ensuremath{\text{\textup{#1}}}}
\newcommand{\Set}{\catname{Set}}
\newcommand{\sSet}{\Set_{\Delta}}
\newcommand{\Cat}{\catname{Cat}}
\newcommand{\CatI}{\catname{Cat}_\infty}
\newcommand{\LCatI}{\widehat{\catname{Cat}}_\infty}
\newcommand{\Fun}{\txt{Fun}}
\newcommand{\Map}{\txt{Map}}
\newcommand{\op}{\txt{op}}
\newcommand{\icat}{$\infty$-category}
\newcommand{\icats}{$\infty$-categories}
\newcommand{\icatl}{$\infty$-categorical}
\newcommand{\xto}[1]{\xrightarrow{#1}}
\newcommand{\csquare}[8]{ %
\[ %
\begin{tikzpicture} %
\matrix (m) [matrix of math nodes,row sep=3em,column sep=2.5em,text height=1.5ex,text depth=0.25ex] %
{ #1 \pgfmatrixnextcell #2 \\ %
  #3 \pgfmatrixnextcell #4 \\ }; %
\path[->,font=\footnotesize] %
(m-1-1) edge node[auto] {$#5$} (m-1-2)%
(m-1-1) edge node[left] {$#6$} (m-2-1)%
(m-1-2) edge node[auto] {$#7$} (m-2-2)%
(m-2-1) edge node[below] {$#8$} (m-2-2);%
\end{tikzpicture}%
\]%
}
\newcommand{\nolabelcsquare}[4]{\csquare{#1}{#2}{#3}{#4}{}{}{}{}}
\newcommand{\opctriangle}[6]{ %
\[ %
\begin{tikzpicture} %
\matrix (m) [matrix of math nodes,row sep=3em,column sep=1.2em,text height=1.5ex,text depth=0.25ex] %
{  #1 \pgfmatrixnextcell \pgfmatrixnextcell #2 \\ %
  \pgfmatrixnextcell #3 \pgfmatrixnextcell \\ %
}; %
\path[->,font=\footnotesize] %
(m-1-1) edge node[above] {$#4$} (m-1-3)%
(m-1-1) edge node[below left] {$#5$} (m-2-2)%
(m-1-3) edge node[below right] {$#6$} (m-2-2);%
\end{tikzpicture}%
\]%
}
\newcommand{\nolabelopctriangle}[3]{\opctriangle{#1}{#2}{#3}{}{}{}}
\newcommand{\id}{\txt{id}}
\DeclareMathOperator{\colimP}{colim}
\newcommand{\colim}{\mathop{\colimP}}
\newcommand{\CatIV}{\CatI^{\mathcal{V}}}
\newcommand{\CATIV}{\txt{CAT}_{\infty}^{\mathcal{V}}}
\newcommand{\FunV}{\Fun^{\mathcal{V}}}
\newcommand{\simp}{\bbDelta}
\newcommand{\Seg}{\txt{Seg}}
\newcommand{\SegI}{\Seg_{\infty}}
\newcommand{\Alg}{\catname{Alg}}
\newcommand{\AlgCat}{\Alg_{\txt{cat}}}
\newcommand{\AlgCatV}{\AlgCat(\mathcal{V})}
\newcommand{\Opd}{\catname{Opd}}
\newcommand{\OpdI}{\Opd_{\infty}}
\newcommand{\OpdIns}{\OpdI^{\txt{ns}}}
\newcommand{\OpdInsg}{\OpdI^{\txt{ns},\txt{gen}}}
\newcommand{\iopd}{$\infty$-operad}
\newcommand{\iopds}{$\infty$-operads}
\newcommand{\nsiopd}{non-symmetric $\infty$-operad}
\newcommand{\nsiopds}{non-symmetric $\infty$-operads}
\newcommand{\gnsiopd}{generalized non-symmetric $\infty$-operad}
\newcommand{\gnsiopds}{generalized non-symmetric $\infty$-operads}
\theoremstyle{definition}
\newtheorem{construction}[theorem]{Construction}
\newcommand{\Dnop}{\simp^{n,\op}}
\newcommand{\LOpdI}{\widehat{\txt{Opd}}_{\infty}}
\newcommand{\Lbrn}{\bbLambda_{/[n]}}
\newcommand{\Lbrnop}{\Lbrn^{\op}}
\newcommand{\ALG}{\txt{ALG}}
\newcommand{\Gop}{\bbGamma^{\op}}
\newcommand{\fALG}{\mathfrak{ALG}}
\newcommand{\ofALG}{\overline{\fALG}}
\newcommand{\Dop}{\simp^{\op}}
\newcommand{\Lop}{\bbLambda^{\op}}
\newcommand{\Dopn}{\simp^{\op}_{/[n]}}
\renewcommand{\OpdIns}{\OpdI^{\txt{ns}}}
\renewcommand{\OpdInsg}{\OpdI^{\txt{ns},\txt{gen}}}
\newcommand{\Bimod}{\txt{Bimod}}
\newcommand{\ALGcat}{\ALG_{\txt{cat}}}
\newcommand{\fALGcat}{\fALG_{\txt{cat}}}
\newcommand{\ofALGcat}{\ofALG_{\txt{cat}}}
\newcommand{\fCAT}{\mathfrak{CAT}}
\newcommand{\Lopn}{\Lop_{/[n]}}
\newcommand{\DopXn}{\Dop_{X_{0},\ldots,X_{n}}}
\newcommand{\DopXYn}{\Dop_{X_{0}\times Y_{0},\ldots,X_{n} \times Y_{n}}}
\newcommand{\LopXn}{\Lop_{X_{0},\ldots,X_{n}}}
\newcommand{\LopXYn}{\Lop_{X_{0}\times Y_{0},\ldots,X_{n} \times Y_{n}}}
\renewcommand{\FunV}{\Fun_{\mathcal{V}}}
\newcommand{\DopX}{\Dop_{X}}
\newcommand{\BMcat}{\Bimod_{\txt{cat}}}
\newcommand{\Algcat}{\Alg_{\txt{cat}}}
\newcommand{\LOpdInsg}{\LOpdI^{\txt{ns},\txt{gen}}}
\begin{document}

\title{Bimodules and Natural Transformations for Enriched $\infty$-Categories}


\author{Rune Haugseng}             

\email{haugseng@mpim-bonn.mpg.de}

%
%
\address{Max-Planck-Institut für Mathematik,
  Vivatsgasse 7,
  53111 Bonn,
  Germany}


\classification{18D05, 18D20, 55U40.}

\keywords{enriched $\infty$-category, bimodule}

\begin{abstract}
  We introduce a notion of bimodule in the setting of enriched
  $\infty$-categories, and use this to construct a double
  $\infty$-category of enriched $\infty$-categories where the two
  kinds of 1-morphisms are functors and bimodules. We then consider a
  natural definition of natural transformations in this context, and
  show that in the underlying $(\infty,2)$-category of enriched
  $\infty$-categories with functors as 1-morphisms the 2-morphisms are
  given by natural transformations.
\end{abstract}

\received{\ldots}   
\revised{\ldots}    
\published{\ldots}  
\submitted{}      
\volumeyear{0} 
\volumenumber{0} 
\issuenumber{0}   
\startpage{0}     
\articlenumber{0} 

\maketitle

\section{Introduction}
This paper is a sequel to \cite{enr} and part of \cite{nmorita}: In
\cite{enr}, David Gepner and I set up a theory of \emph{enriched
  \icats{}}, using a non-symmetric variant of Lurie's theory of
\iopds{}, and in \cite{nmorita}*{\S 5} I constructed a double \icat{} 
$\fALG(\mathcal{V})$ of associative algebra objects in a monoidal
\icat{} $\mathcal{V}$, with the two kinds of 1-morphism given by
algebra homomorphisms and bimodules. The goal of this
paper is to construct a ``many-object'' analogue of this double
\icat{}: In \cite{enr} we defined enriched \icats{} as algebras for
``many-object associative operads'', and there is an analogous
extension of the definition of bimodules in \cite{nmorita} using
``many-object bimodule operads''. Using this definition we extend the
constructions of \cite{nmorita} to get our main result:
\begin{theorem}
  Let $\mathcal{V}$ be a monoidal \icat{} compatible with small
  colimits. Then there exists a double \icat{} $\fALGcat(\mathcal{V})$
  of $\mathcal{V}$-enriched \icats{}, with the two kinds of 1-morphism
  given by bimodules and functors.  Moreover, if $\mathcal{V}$ is an
  $\mathbb{E}_{n+1}$-monoidal \icat{}, then $\fALGcat(\mathcal{V})$
  inherits a natural $\mathbb{E}_{n}$-monoidal structure.
\end{theorem}
We'll construct this double \icat{} in \S\ref{sec:double} and discuss
its functoriality and monoidal structures in \S\ref{sec:func}.

We can also restrict the objects of this double \icat{} to those
$\mathcal{V}$-\icats{} that are \emph{complete}, i.e. local with
respect to the fully faithful and essentially surjective functors, to
obtain the double \icat{} $\fCAT(\mathcal{V})$, which we regard as the
``correct'' double \icat{} of
$\mathcal{V}$-\icats{}.\footnote{However, we do not show here that
  this double subcategory is functorial or inherits the monoidal
  structures on $\fALGcat(\mathcal{V})$ --- this is a consequence of
  the Yoneda Lemma, which we hope to prove in a sequel to this paper.}

The double \icat{} $\fALGcat(\mathcal{V})$ has two underlying
$(\infty,2)$-categories, with the 1-morphisms given either by
bimodules or by functors. In the latter case, we would expect the
2-morphisms to be \emph{natural transformations}. The second main
result of this paper is that this is indeed the case: We will use the
obvious notion of a natural transformation of functors between
$\mathcal{V}$-\icats{} $\mathcal{C}$ and $\mathcal{D}$, namely a
functor \[\mathcal{C} \otimes [1] \to \mathcal{D},\] to define a Segal
space $\FunV(\mathcal{C}, \mathcal{D})$ of $\mathcal{V}$-functors, and
show:
\begin{theorem}
  Let $\ALGcat(\mathcal{V})$ be the $(\infty,2)$-category (in the
  sense of a 2-fold Segal space) underlying $\fALGcat(\mathcal{V})$
  with functors as 1-morphisms. There is a natural equivalence between
  $\FunV(\mathcal{C}, \mathcal{D})$ and the Segal space
  $\ALGcat(\mathcal{V})(\mathcal{C}, \mathcal{D})$ of maps from
  $\mathcal{C}$ to $\mathcal{D}$ in $\fALGcat(\mathcal{V})$.
\end{theorem}
We'll prove this in \S\ref{sec:infty2}. If $\mathcal{D}$ is complete
we will also observe that the Segal space $\FunV(\mathcal{C}, \mathcal{D})$
is complete for any $\mathcal{C}$, so as a consequence we obtain that
the 2-fold Segal space $\CATIV$ underlying $\fCAT(\mathcal{V})$ with
functors as 1-morphisms is complete.

In ordinary enriched category theory the notion of bimodule is
classical, and according to the $n$lab was invented independently by a
number of people back in the 1960s, though with much of their theory
introduced by B\'{e}nabou. The specific definition of a bimodule between
enriched $\infty$-categories we consider here was, however, inspired by the
``external'' notion of bimodule given by Bacard in
\cite{BacardSegEnrI} in the context of a model-categorical approach to
weakly enriched categories.

To motivate this paper, let's now briefly consider some future
directions in which I hope to extend the results proved here:
\begin{itemize}
\item In \cite{nmorita}*{\S 6} I constructed for any
  $\mathbb{E}_{n}$-monoidal \icat{} an
  $(\infty,n+1)$-category of $\mathbb{E}_{n}$-algebras in it. Similarly, I hope to construct
  $(\infty,n+1)$-categories of enriched $(\infty,n)$-categories also
  for $n > 1$ --- these are expected to be the targets for a number of
  interesting extended topological quantum field theories.
\item In \cite{HA}*{\S 4.6.3} Lurie proves that all associative
  algebras are dualizable in the \icat{} of algebras and
  bimodules. This should extend to a proof that all enriched \icats{}
  are dualizable, which will lead to a definition of \emph{topological
    Hochschild homology} for enriched \icats{}. Similarly, the proof
  in \cite{HA}*{\S 4.6.4} that the 2-dualizable algebras are precisely
  the smooth and proper ones should extend to a characterization of
  the 2-dualizable enriched \icats{}.
\item For ordinary enriched categories, a bimodule between
  $\mathbf{V}$-categories $\mathbf{C}$ and $\mathbf{D}$ is often
  defined as a functor from $\mathbf{C} \otimes \mathbf{D}^{\op}$ to
  the self-enrichment of $\mathbf{V}$. The same should be true for the
  bimodules we consider here: the \icat{} of
  $\mathcal{C}$-$\mathcal{D}$-bimodules in $\mathcal{V}$ should be a
  representable functor of $\mathcal{C}$, with the representing object
  being $\mathcal{V}$-valued enriched presheaves on
  $\mathcal{D}$. This can be thought of as a form of the \emph{Yoneda
    Lemma} for enriched \icats{}. (In particular, the more obvious
  formulation that there is a fully faithful Yoneda embedding into
  enriched presheaves would be an easy consequence of this.)
\item Classically, the double category of $\mathbf{V}$-enriched
  categories, functors, and bimodules is an example of a
  \emph{proarrow equipment}. This is an abstract context in which one
  can define weighted (co)limits and Kan extensions. An analogous
  theory can be developed in the \icatl{} context, with the double
  \icat{} we construct here as a key example. Combined with the Yoneda
  Lemma, which gives a checkable criterion for a bimodule to be
  represented by a functor, this should give very useful tools for
  making interesting constructions and in general ``doing category
  theory'' with enriched \icats{} (with a particularly interesting
  case here being $(\infty,n)$-categories).
\end{itemize}

\subsection{Overview}
In \S\ref{sec:nsop} we review some key notions and results from the
theory of non-symmetric \iopds{}, and in \S\ref{sec:enr} we briefly
recall the main definitions and results on enriched \icats{} from
\cite{enr} that we'll make use of. Then in \S\ref{sec:bimod} we
introduce our definition of bimodules between enriched \icats{}, and
motivate it by relating it to the classical notion of a bimodule for
enriched categories. Next we discuss, in \S \ref{sec:compose}, how to
compose these bimodules, and observe that this is analogous to the
composition of bimodules for ordinary enriched categories. After these
introductory sections we then get to work in \S\ref{sec:double}, where
we construct the double \icat{} of enriched \icats{}. In
\S\ref{sec:nattr} we consider the obvious definition of natural
transformations in this context and show these are the 1-morphisms in
an \icat{} of enriched functors, and then we compare this to the
mapping \icat{} of functors coming from our double \icat{} in
\S\ref{sec:infty2}. Finally we discuss the functoriality of the double
\icats{} and their natural monoidal structures in \S\ref{sec:func}.

\subsection{Notation}
We recycle the notation of \cite{enr} and \cite{nmorita}. In
particular, for $[n]$ an object of $\simp$ we'll abbreviate
$(\simp_{/[n]})^{\op}$ to $\Dopn$ to avoid clutter as this object will
appear frequently, often in subscripts. If $\phi \colon [m] \to [n]$
is an object of $\simp_{/[n]}$ we'll also denote this object by the
list $(\phi(0),\ldots, \phi(m))$ where $0 \leq \phi(i) \leq \phi(i+1)
\leq m$.

\subsection{Acknowledgments}
This is the final paper based on part of my Ph.D. thesis --- though
much improved by being left to stew for a while --- so it is a
pleasure to get to thank Haynes Miller once more for being a great
advisor, as well as the Norway-America Association and the
American-Scandinavian Foundation for partially funding my studies at
MIT. This also seems an appropiate occasion to thank David Gepner for
steering me away from a truly atrocious approach to defining \icats{} of
functors between enriched \icats{} back in 2012.

\section{Non-Symmetric $\infty$-Operads}\label{sec:nsop}
Here we briefly recall some of the basic definitions from the theory of
(non-symmetric) \iopds{} and summarize some key results that we
will use in this paper. For motivation for these definitions we refer
the reader to the discussion in \cite{enr}*{\S 2}, and for
proofs we refer to \cite{enr}*{\S 3 and \S A}, and of course \cite{HA}.

\begin{definition}\label{defn:activeinert}
  Let $\simp$ be the usual simplicial indexing category. A morphism $f
  \colon [n] \to [m]$ in $\simp$ is \defterm{inert} if it is the
  inclusion of a sub-interval of $[m]$, i.e. $f(i) = f(0)+i$ for all
  $i$, and \defterm{active} if it preserves the extremal elements,
  i.e.  $f(0) = 0$ and $f(n) = m$. We say a morphism in $\simp^{\op}$
  is \emph{active} or \emph{inert} if it is so when considered as a
  morphism in $\simp$, and write $\simp^{\op}_{\txt{act}}$ and
  $\simp^{\op}_{\txt{int}}$ for the subcategories of $\simp^{\op}$
  with active and inert morphisms, respectively. We write $\rho_{i}
  \colon [n] \to [1]$ for the inert map in $\simp^{\op}$ corresponding
  to the inclusion $\{i-1,i\} \hookrightarrow [n]$.
\end{definition}

\begin{definition}\label{defn:gnsiopd}
  A \defterm{generalized non-symmetric $\infty$-operad}
  is an inner fibration $\pi \colon \mathcal{M} \to
  \simp^{\op}$ such that:
  \begin{enumerate}[(i)]
  \item For each inert map  $\phi \colon [n] \to [m]$ in $\simp^{\op}$ and
    every $X \in \mathcal{M}$ such that $\pi(X) = [n]$, there exists
    a $\pi$-coCartesian edge $X \to \phi_{!}X$ over $\phi$.
  \item For every $[n]$ in $\simp^{\op}$, the map
    \[ \mathcal{M}_{[n]} \to \mathcal{M}_{[1]}
    \times_{\mathcal{M}_{[0]}} \cdots \times_{\mathcal{M}_{[0]}} \mathcal{M}_{[1]}
    \]
    induced by the inert maps $[n] \to [1],[0]$ is an equivalence.
  \item Given $C \in \mathcal{M}_{[n]}$ and a coCartesian map $C \to
    C_{\alpha}$ over each inert map $\alpha$ from $[n]$ to
    $[1]$ and $[0]$, the object $C$ is a $\pi$-limit of the
    $C_{\alpha}$'s.
  \end{enumerate}
  A \emph{non-symmetric \iopd{}} is a \gnsiopd{} $\mathcal{M}$ such
  that $\mathcal{M}_{0} \simeq *$.
\end{definition}

\begin{definition}
  A \emph{double \icat{}} is a \gnsiopd{} $\mathcal{M} \to \Dop$ that
  is also a coCartesian fibration, and a \emph{monoidal \icat{}} is a
  \nsiopd{} that is also a coCartesian fibration.
\end{definition}
Equivalently, a double \icat{} can be defined as a coCartesian
fibration such that the associated functor $F \colon \Dop \to \CatI$
satisfies the \emph{Segal condition}: for every $[n] \in \Dop$, the
functor
\[ F([n]) \to F([1]) \times_{F([0])} \cdots \times_{F([0])} F([1]), \]
induced by the maps $\rho_{i}\colon [n] \to [1]$ and all the maps
$[n] \to [0]$, is an equivalence of \icats{}.

\begin{definition}
  A morphism of (generalized) \nsiopds{} is a commutative diagram
  \opctriangle{\mathcal{M}}{\mathcal{N}}{\simp^{\op}}{\phi}{}{} such
  that $\phi$ carries inert morphisms in $\mathcal{M}$ to inert
  morphisms in $\mathcal{N}$. We will also refer to a morphism of
  (generalized) \nsiopds{} $\mathcal{M} \to \mathcal{N}$ as an
  \emph{$\mathcal{M}$-algebra} in $\mathcal{N}$; we write
  $\Alg_{\mathcal{M}}(\mathcal{N})$ for the full subcategory of the
  \icat{} $\Fun_{\simp^{\op}}(\mathcal{M},\mathcal{N})$ of functors
  over $\simp^{\op}$ spanned by the morphisms of (generalized)
  \nsiopds{}.
\end{definition}

Using the theory of categorical patterns developed in \cite{HA}*{\S B}, we can define \icats{}
$\OpdIns$ and $\OpdInsg$ of \nsiopds{} and \gnsiopds{}. The \icats{}
of algebras are functorial in these \icats{}, and indeed determine a
lax monoidal functor
\[ (\OpdInsg)^{\op} \times \OpdInsg \to \CatI.\]
For a \gnsiopd{} $\mathcal{M}$ we define the \emph{algebra
  fibration} \[\Alg(\mathcal{M}) \to \OpdInsg\] to be a Cartesian
fibration associated to the functor \[\Alg_{(\blank)}(\mathcal{M})
\colon (\OpdInsg)^{\op} \to \CatI.\]

We say that a monoidal \icat{} $\mathcal{V}^{\otimes}$ is
\emph{compatible with small colimits} if the underlying \icat{}
$\mathcal{V}$ has small colimits and the tensor product preserves
colimits in each variable. If $\mathcal{V}$ is compatible with small
colimits and $f \colon \mathcal{O} \to \mathcal{P}$ is a morphism of
small \iopds{}, then the functor $f^{*} \colon
\Alg_{\mathcal{P}}(\mathcal{V}) \to \Alg_{\mathcal{O}}(\mathcal{V})$
has a left adjoint $f_{!}$, given by taking operadic left Kan
extensions along $f$. If $A$ is an $\mathcal{O}$-algebra in
$\mathcal{V}$, the value of the $\mathcal{P}$-algebra $f_{!}A$ at
$p \in \mathcal{P}_{[1]}$ is given by a certain colimit,\footnote{In
  fact, by \cite{HA}*{Corollary 3.1.3.5} this colimit description
essentially characterizes $f_{!}A$, in the sense that a morphism $\phi
\colon A \to f^{*}B$ is adjunct to an equivalence $f_{!}A \isoto B$
\IFF{} $\phi$ induces an equivalence between the appropriate colimit
and $B(p)$ for all $p \in \mathcal{P}_{[1]}$.} which we can somewhat informally express as
\[ \colim_{(o_{1},\ldots,o_{n}) \in \mathcal{O}^{\txt{act}}_{/p}}
A(o_{1}) \otimes \cdots \otimes A(o_{n}).\]
Here $\mathcal{O}^{\txt{act}}_{/p}$ is the \icat{}
$\mathcal{O}^{\txt{act}} \times_{\mathcal{P}^{\txt{act}}}
\mathcal{P}^{\txt{act}}_{/p}$ of objects of $\mathcal{O}$ whose image
in $\mathcal{P}$ has an active map to $p$, and active maps between
them.

In good cases we can also explicitly describe this left adjoint in the
same way when $f$ is a morphism of \gnsiopds{} (by
\cite{nmorita}*{Theorem 4.40}), namely if $f$ is
\emph{extendable} in the sense of \cite{nmorita}*{Definition 4.38},
which we recall here for the reader's convenience:
\begin{definition}\label{defn:extendable}
  A morphism $f \colon \mathcal{O} \to \mathcal{P}$ of \gnsiopds{} is
  \emph{extendable} if for every $P \in \mathcal{P}$ lying over $[n]
  \in \Dop$, the morphism
  \[ \mathcal{O}^{\txt{act}}_{/P} \to \prod_{i = 1}^{n}
  \mathcal{O}^{\txt{act}}_{/P_{i}},\] induced by the coCartesian
  morphisms in $\mathcal{O}$ over the maps $\rho_{i}$, is cofinal.
\end{definition}

 \section{Enriched $\infty$-Categories}\label{sec:enr}
In this section we recall the definition of enriched \icats{} as
``many-object associative algebras'' we introduced in \cite{enr}, and
some key definitions and results from that paper that we will make use
of here. For further motivation for this definition we refer to
\cite[\S 2]{enr}, and for complete details of the constructions we
refer to \cite[\S 4--5]{enr}.

\begin{definition}
  Given a space $X$, let $i_{X} \colon \Dop \to \mathcal{S}$ be the
  right Kan extension of the functor $\{[0]\} \to \mathcal{S}$ that sends $[0]$
  to $X$. We write $\Dop_{X} \to \Dop$ for the left fibration
  associated to the functor $i_{X}$.
\end{definition}

\begin{remark}
  The functor $i_{X}$ sends $[n]$ to $X^{\times (n+1)}$ and takes face
  maps to projections and degeneracies to diagonal maps. We can thus
  identify the objects of $\Dop_{X}$ lying over $[n]$ with lists
  $(x_{0},\ldots,x_{n})$ of points $x_{i} \in X$.
\end{remark}

\begin{lemma}[\cite{enr}*{Lemma 4.1.3}]
  For any space $X$, the projection $\Dop_{X}\to \Dop$ is a
  double \icat{}.
\end{lemma}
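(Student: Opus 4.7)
The plan is to appeal to the equivalent characterization of a double \icat{} given immediately after Definition~\ref{defn:gnsiopd}: a double \icat{} is a coCartesian fibration over $\Dop$ whose classifying functor satisfies the Segal condition. Since $\Dop_{X} \to \Dop$ is a left fibration by construction, it is automatically a coCartesian fibration (with $\infty$-groupoid fibers), so only the Segal condition needs verification.

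First I would compute the classifying functor $F \colon \Dop \to \mathcal{S}$ explicitly from the pointwise formula for the right Kan extension along the inclusion $j \colon \{[0]\} \hookrightarrow \Dop$. The value $F([n])$ is the limit of the constant diagram with value $X$ indexed by the comma \icat{} $[n] \downarrow j$, whose objects are the morphisms $[n] \to [0]$ in $\Dop$ --- equivalently, the $n+1$ vertex inclusions $[0] \to [n]$ in $\simp$. Because this comma \icat{} is discrete with $n+1$ objects, we find that $F([n]) \simeq X^{n+1}$, one factor for each vertex of $[n]$, with functoriality given by pullback along the corresponding maps in $\simp$.

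With this identification, the Segal condition is immediate. The map
\[ F([n]) \longrightarrow F([1]) \times_{F([0])} \cdots \times_{F([0])} F([1]) \]
induced by the inert maps $\rho_{i} \colon [n] \to [1]$ and the vertex maps $[n] \to [0]$ sends $(x_{0},\ldots,x_{n})$ to $((x_{0},x_{1}),(x_{1},x_{2}),\ldots,(x_{n-1},x_{n}))$, and the iterated pullback on the right --- matching consecutive vertex coordinates --- is tautologically identified with $X^{n+1}$ via this map. There is no real obstacle: the entire content lies in correctly identifying $F$ via the Kan extension formula, after which the Segal condition is a formality. One minor sanity check is that the functor classifying the left fibration $\Dop_{X}$ is indeed the right Kan extension used in its definition, but this is immediate from unstraightening, since the extension takes values in $\mathcal{S}$.
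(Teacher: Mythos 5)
Your proof is correct and follows essentially the same route as the paper: the paper itself defers this statement to \cite{enr}*{Lemma 4.1.3}, but its proof of the generalization Lemma~\ref{lem:DopXndouble} proceeds exactly as you do, by identifying the right Kan extension explicitly as sending $[n]$ to a product of copies of $X$ indexed by the vertices and then checking the Segal condition, which becomes tautological. The only cosmetic difference is that the paper concludes by citing \cite{enr}*{Proposition 3.5.4} rather than invoking the equivalent coCartesian-fibration characterization of double $\infty$-categories directly, as you do.
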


\begin{definition}
  If $\mathcal{V}$ is a monoidal \icat{}, a
  \emph{$\mathcal{V}$-enriched \icat{}} (or
  \emph{$\mathcal{V}$-\icat}) with space of objects $X$ is a
  $\Dop_{X}$-algebra in $\mathcal{V}$.
\end{definition}

\begin{remark}
  Observe that an algebra $\mathcal{C} \colon \Dop_{X} \to
  \mathcal{V}^{\otimes}$ assigns to every $(x,y) \in X^{\times 2}$ an
  object $\mathcal{C}(x,y) \in \mathcal{V}$. Speaking slightly
  informally, for every $(x,y,z) \in X^{\times 3}$ the face map
  $(x,y,z) \to (x,z)$ in $\Dop_{X}$ gives a composition map
  $\mathcal{C}(x,y) \otimes \mathcal{C}(y,z) \to \mathcal{C}(x,z)$,
  and the degeneracy $(x) \to (x,x)$ gives a unit $I \to
  \mathcal{C}(x,x)$ where $I$ is the unit of the monoidal \icat{}
  $\mathcal{V}$. The remaining data in $\mathcal{C}$ shows that this
  composition is coherently homotopy-associative and compatible with
  the unit --- this is precisely the data we would expect an enriched
  \icat{} to be given by.
\end{remark}

\begin{definition}
  The \icats{} $\Dop_{X}$ are clearly functorial in $X$, and so
  determine a functor $\mathcal{S} \to \OpdInsg$. If $\mathcal{V}$ is
  a monoidal \icat{}, we let $\Algcat(\mathcal{V}) \to \mathcal{S}$ be
  a Cartesian fibration associated to the functor $\mathcal{S}^{\op}
  \to \CatI$ that sends $X$ to $\Alg_{\Dop_{X}}(\mathcal{V})$.
\end{definition}

If we take $\mathcal{V}$ to be the \icat{} $\mathcal{S}$ of spaces,
then $\Algcat(\mathcal{S})$ is equivalent to the \icat{} $\SegI$ of
Segal spaces.

The \icat{} $\Algcat(\mathcal{V})$ is functorial in $\mathcal{V}$, and
it is a lax monoidal functor with respect to the Cartesian
product of monoidal \icats{} and of \icats{}. Moreover, if
$\mathcal{V}$ is compatible with small colimits then
$\Algcat(\mathcal{V})$ has small colimits, and it is tensored over
$\Algcat(\mathcal{S})$ in such a way that the tensoring preserves
colimits in each variable. Any Segal space thus gives rise to a
$\mathcal{V}$-\icat{} by tensoring with the unit of $\mathcal{V}$,
regarded as a 1-object $\mathcal{V}$-\icat{}.

In particular, if we write $E^{n}$ for the contractible category with
objects $\{0,\ldots,n\}$ and a unique morphism $i \to j$ for all $i$
and $j$, this determines a $\mathcal{V}$-\icat{} we also denote
$E^{n}$.\footnote{In fact, we can define $E^{n}$ as a
  $\mathcal{V}$-\icat{} for an arbitrary $\mathcal{V}$, but we will
  not need this generality here.}
We say that a $\mathcal{V}$-\icat{} $\mathcal{C}$ is \emph{complete}
if it is local with respect to the map $E^{1} \to E^{0}$, i.e. if the
map of spaces $\Map(E^{0}, \mathcal{C}) \to \Map(E^{1}, \mathcal{C})$
is an equivalence. Under the equivalence between
$\Algcat(\mathcal{S})$ and Segal spaces, the complete
$\mathcal{S}$-\icats{} precisely correspond to the \emph{complete}
Segal spaces in the sense of Rezk.

We write $\CatIV$ for the full subcategory of $\Algcat(\mathcal{V})$
spanned by the complete $\mathcal{V}$-\icats{}. Our main result in
\cite{enr} was that the inclusion $\CatIV \hookrightarrow
\Algcat(\mathcal{V})$ has a left adjoint, which exhibits $\CatIV$ as
the localization of $\Algcat(\mathcal{V})$ with respect to the class
of \emph{fully faithful and essentially surjective}
morphisms\footnote{We do not recall the definition of these here, as
  we will not make use of this class of morphisms in this
  paper.}. This means that $\CatIV$ is the ``correct'' homotopy theory
of $\mathcal{V}$-enriched \icats{}.

\section{Bimodules}\label{sec:bimod}
If $\mathbf{V}$ is a closed symmetric monoidal category, so that there
is a tensor product of $\mathbf{V}$-categories and $\mathbf{V}$ has a
natural self-enrichment $\overline{\mathbf{V}}$, the classical
definition of a \emph{bimodule} between $\mathbf{V}$-categories
$\mathbf{C}$ and $\mathbf{D}$ is a $\mathbf{V}$-functor
\[ M \colon \mathbf{C}^{\op} \otimes \mathbf{D} \to
\overline{\mathbf{V}}.\] 
We can reformulate this definition to see it as a many-object
version of the usual notion of a bimodule for associative algebras:
unravelling the definition, a $\mathbf{C}$-$\mathbf{D}$-bimodule
consists of:
\begin{itemize}
\item for all $c \in \mathbf{C}$ and $d \in \mathbf{D}$, an object
  $M(c,d) \in \mathbf{V}$,
\item for all $c',c \in \mathbf{C}$ and $d \in \mathbf{D}$, an action
  map $\mathbf{C}(c',c) \otimes M(c,d) \to M(c',d)$, compatible with
  composition and units in $\mathbf{C}$,
\item for all $c \in \mathbf{C}$ and $d, d' \in \mathbf{D}$, an action
  map $M(c,d) \otimes \mathbf{D}(d,d') \to M(c,d')$, compatible with
  composition and units in $\mathbf{D}$,  
\end{itemize}
such that for $c,c' \in \mathbf{C}$ and $d,d' \in \mathbf{D}$, the diagram
\nolabelcsquare{\mathbf{C}(c',c) \otimes M(c,d) \otimes
  \mathbf{D}(d,d')}{M(c',d)\otimes \mathbf{D}(d,d')}{\mathbf{C}(c',c)
  \otimes M(c,d')}{M(c',d')}
commutes. Notice that this definition does not require $\mathbf{V}$ to
be closed or symmetric monoidal.

Since we defined enriched \icats{} as algebras for ``many-object
associative \iopds{}'', this suggests that we can define bimodules for
enriched \icats{} as algebras for ``many-object bimodule
\iopds{}''. In \cite{nmorita} we observed that bimodules can be
regarded as algebras for the \gnsiopd{} $\Dop_{/[1]} \to \Dop$. Here
is the obvious ``many-object'' version of this:
\begin{definition}
  Given spaces $X$ and $Y$, we let $\Dop_{X,Y}\to \Dop_{/[1]}$ be a
  left fibration associated to the functor $\Dop_{/[1]} \to
  \mathcal{S}$ obtained as a right Kan extension of the functor
  $\{(0), (1)\} \to \mathcal{S}$ sending $(0)$ to $X$ and $(1)$ to $Y$.
\end{definition}
The composite functor $\Dop_{X,Y} \to \Dop_{/[1]} \to \Dop$ is a
double \icat{} --- this is a special case of
Lemma~\ref{lem:DopXndouble}, which we'll prove below.

\begin{remark}
  An object of $\Dop_{/[1]}$ can be described as a list
  $(i_{0},\ldots,i_{n})$ where $0 \leq i_{k}\leq i_{k+1} \leq 1$. An
  object of $\Dop_{X_{0},X_{1}}$ lying over this is then a list
  $(x_{0},\ldots,x_{n})$ with $x_{k}\in X_{i_{k}}$. There are
  inclusions $\Dop_{X_{i}} \hookrightarrow \Dop_{X_{0},X_{1}}$ lying over the two
  inclusions $\Dop \to \Dop_{/[1]}$ (given by composing with the two
  maps $[0] \to [1]$ in $\simp$). Suppose $\mathcal{V}$ is a monoidal
  \icat{} and $M \colon \Dop_{X_{0},X_{1}} \to \mathcal{V}^{\otimes}$
  is a $\Dop_{X_{0},X_{1}}$-algebra in $\mathcal{V}$. If we write
  $\mathcal{C}$ and $\mathcal{D}$ for the two enriched \icats{}
  obtained by restricting $M$ to $\Dop_{X_{0}}$ and $\Dop_{X_{1}}$,
  the additional data determined by $M$ can be described as:
  \begin{itemize}
  \item for $c \in X_{0}$ and $d \in X_{1}$, an object
    $M(c,d) \in \mathcal{V}$,
  \item for $c',c \in X_{0}$ and $d \in X_{1}$, a morphism
    $\mathcal{C}(c',c) \otimes M(c,d) \to M(c',d)$, coming from the
    map $(c',c,d) \to (c',d)$ (over $d_{1} \colon (0,0,1) \to (0,1)$),
  \item for $c \in X_{0}$ and $d,d' \in X_{1}$, a morphism
    $M(c,d) \otimes \mathcal{D}(d,d') \to M(c,d')$, coming from the
    map $(c,d,d') \to (c,d')$ (over $d_{1} \colon (0,1,1) \to (0,1)$),
  \item for $c',c \in X_{0}$ and $d,d' \in X_{1}$, a homotopy-commutative square
    \nolabelcsquare{\mathcal{C}(c',c) \otimes M(c,d) \otimes
      \mathcal{D}(d,d')}{M(c,d) \otimes
      \mathcal{D}(d,d')}{\mathcal{C}(c',c) \otimes M(c,d)}{M(c',d'),}
    since the two maps $(c',c,d,d') \to (c',d')$ are homotopic,
  \item together with data showing that these action maps are
    homotopy-coherently compatible with the composition and unit maps
    in $\mathcal{C}$ and $\mathcal{D}$.
  \end{itemize}
  In other words, $M$ is precisely a homotopy-coherent version of the
  notion of bimodule for enriched categories we considered above.
\end{remark}

\begin{definition}
  The \gnsiopds{} $\Dop_{X,Y}$ are clearly natural in $X$ and $Y$, so
  we get a functor $\mathcal{S}^{\times 2} \to \OpdInsg$. If
  $\mathcal{V}$ is a monoidal \icat{}, we let $\BMcat(\mathcal{V}) \to
  \mathcal{S}^{\times 2}$ be a Cartesian fibration associated to the
  functor sending $(X,Y)$ to $\Alg_{\Dop_{X,Y}}(\mathcal{V})$. There
  are natural maps of \gnsiopds{} $\Dop_{X}, \Dop_{Y} \hookrightarrow
  \Dop_{X,Y}$, which leads to a functor $\BMcat(\mathcal{V}) \to
  \Algcat(\mathcal{V})^{\times 2}$. If $\mathcal{C}$ and $\mathcal{D}$
  are $\mathcal{V}$-\icats{}, we call an object of the fibre of this
  map at $(\mathcal{C},\mathcal{D})$ a
  \emph{$\mathcal{C}$-$\mathcal{D}$-bimodule}.
\end{definition}

\section{Composing Bimodules}\label{sec:compose}
Let $\mathbf{V}$ be an ordinary monoidal category. If $\mathbf{A}$,
$\mathbf{B}$, and $\mathbf{C}$ are $\mathbf{V}$-categories and we are
given an $\mathbf{A}$-$\mathbf{B}$-bimodule $M$ and a
$\mathbf{B}$-$\mathbf{C}$-bimodule $N$, their \emph{composite}, which
we'll denote $M \otimes_{\mathbf{B}} N$, is the
$\mathbf{A}$-$\mathbf{C}$-bimodule given by sending $(a,c)$ to the
coequalizer
\[ \coprod_{b,b' \in \mathbf{B}} M(a,b) \otimes \mathbf{B}(b,b')
\otimes N(b',c) \rightrightarrows \coprod_{b \in \mathbf{B}} M(a,b) \otimes
N(b,c),\] 
with the two maps given by the action of $\mathbf{B}$ on $M$
and $N$. In fact, this is a reflexive coequalizer, since we get a map
in the other direction using the unit maps of $\mathbf{B}$. When
passing from ordinary categories to \icats{} the natural replacement
of a reflexive coequalizer is usually the geometric realization of a
simplicial object, and indeed there is a natural simplicial object
extending this coequalizer diagram, namely:
  \[ 
\begin{tikzpicture} %
\matrix (m) [matrix of math nodes,row sep=1.5em,column sep=1.5em,text
height=1.5ex,text depth=0.25ex]
{ \vdots \\
\displaystyle{\coprod_{b,b',b'',b''' \in \mathbf{B}}} M(a,b) \otimes
\mathbf{B}(b,b') \otimes \mathbf{B}(b',b'') \otimes
\mathbf{B}(b'',b''')
 \otimes N(b''',c) \\
\displaystyle{\coprod_{b,b',b'' \in \mathbf{B}}} M(a,b) \otimes
\mathbf{B}(b,b') \otimes \mathbf{B}(b',b'')
 \otimes N(b'',c) \\
\displaystyle{\coprod_{b,b' \in \mathbf{B}}} M(a,b) \otimes \mathbf{B}(b,b')
\otimes N(b',c) \\
\displaystyle{\coprod_{b \in \mathbf{B}}} M(a,b) \otimes
N(b,c),
\\};
\path[->]
(m-1-1) edge (m-2-1)
([xshift=12] m-2-1.south) edge ([xshift=12] m-3-1.north)
([xshift=8] m-3-1.north) edge ([xshift=8] m-2-1.south)
([xshift=4] m-2-1.south) edge ([xshift=4] m-3-1.north)
(m-3-1) edge (m-2-1)
([xshift=-4] m-2-1.south) edge ([xshift=-4] m-3-1.north)
([xshift=-12] m-2-1.south) edge ([xshift=-12] m-3-1.north)
([xshift=-8] m-3-1.north) edge ([xshift=-8] m-2-1.south)

([xshift=8] m-3-1.south) edge ([xshift=8] m-4-1.north)
([xshift=4] m-4-1.north) edge ([xshift=4] m-3-1.south)
(m-3-1) edge (m-4-1)
([xshift=-8] m-3-1.south) edge ([xshift=-8] m-4-1.north)
([xshift=-4] m-4-1.north) edge ([xshift=-4] m-3-1.south)
([xshift=4] m-4-1.south) edge ([xshift=4] m-5-1.north)
(m-5-1) edge (m-4-1)
([xshift=-4] m-4-1.south) edge ([xshift=-4] m-5-1.north)
;
\end{tikzpicture}%
\]
where the face maps are given by the action of $\mathbf{B}$ on the
bimodules, and the degeneracy maps by the unit maps for $\mathbf{B}$.
We should therefore expect the composition of bimodules for enriched
\icats{} to be given by the colimit of a simplicial object analogous
to this. 

On the other hand, in \cite{nmorita} we defined the tensor
product of bimodules for associative algebras as an operadic left Kan
extension. This procedure has a natural generalization to the
many-object setting, which gives a precise definition of the composite
of two bimodules. We'll now introduce this, and then show that this
operadic Kan extension is in fact given by taking the expected
analogue of the colimit above.

\begin{definition}
  Given spaces $X_{0},X_{1},X_{2}$, we let $\Dop_{X_{0},X_{1},X_{2}}
  \to \Dop_{/[2]}$ be the left fibration associated to the functor
  $\Dop_{/[2]} \to \mathcal{S}$ obtained by right Kan extension from
  the functor $\{(0),(1),(2)\} \to \mathcal{S}$ sending $(i)$ to
  $X_{i}$.
\end{definition}
The composite $\Dop_{X_{0},X_{1},X_{2}}
  \to \Dop_{/[2]} \to \Dop$ is a double \icat{} by Lemma~\ref{lem:DopXndouble}.

\begin{remark}
  A $\Dop_{X_{0},X_{1},X_{2}}$-algebra in a monoidal \icat{}
  $\mathcal{V}$ can be interpreted as the data of:
  \begin{itemize}
  \item three $\mathcal{V}$-\icats{} $\mathcal{C}_{i}$ with $X_{i}$ as
    space of objects ($i = 0,1,2$),
  \item three bimodules: for $0 \leq i < j \leq 2$, a
    $\mathcal{C}_{i}$-$\mathcal{C}_{j}$-bimodule $M_{ij}$
  \item a $\mathcal{C}_{1}$-bilinear map $M_{01} \otimes M_{12} \to
    M_{02}$, i.e. given $x_{i} \in X_{i}$ we have maps \[M_{01}(x_{0},x_{1})
    \otimes M_{12}(x_{1},x_{2}) \to M_{02}(x_{0},x_{2}),\] compatible
    with the action of $\mathcal{C}_{1}$.
  \end{itemize}
  We want to restrict ourselves to the case where this map exhibits
  $M_{02}$ as the tensor product or composite $M_{01}
  \otimes_{\mathcal{C}_{1}} M_{12}$. As in \cite{nmorita}, we do this
  by considering only those $\Dop_{X_{0},X_{1},X_{2}}$ that arise as
  the left operadic Kan extensions of algebras for a subcategory of
  $\Dop_{X_{0},X_{1},X_{2}}$:
\end{remark}

\begin{definition}
  Recall that a map $\phi \colon [n] \to [m]$ in $\simp$ is said to be
  \emph{cellular} if $\phi(i+1) - \phi(i) \leq 1$ for all $i$, and
  that we write $\Lopn$ for the full subcategory of $\Dopn$ spanned by
  the cellular maps; this is a \gnsiopd{} by \cite{nmorita}*{Lemma 5.5}. We define $\Lop_{X_{0},X_{1},X_{2}}$ by the
  pullback square
  \nolabelcsquare{\Lop_{X_{0},X_{1},X_{2}}}{\Dop_{X_{0},X_{1},X_{2}}}{\Lop_{/[2]}}{\Dop_{/[2]}.}
\end{definition}
This is a pullback square in \gnsiopds{}, so
$\Lop_{X_{0},X_{1},X_{2}}$ is a \gnsiopd{}. Moreover, the inclusion
\[\tau_{X_{0},X_{1},X_{2}} \colon \Lop_{X_{0},X_{1},X_{2}} \to
\Dop_{X_{0},X_{1},X_{2}}\] is extendable in the sense of
Definition~\ref{defn:extendable} by
Proposition~\ref{propn:LopXnDopXnextble} (i.e. operadic left Kan
extensions along this map can be described using \gnsiopds{}), and
the \gnsiopd{} $\Lop_{X_{0},X_{1},X_{2}}$ is equivalent to the pushout
$\Dop_{X_{0},X_{1}} \amalg_{\Dop_{X_{1}}} \Dop_{X_{1},X_{2}}$ of
\gnsiopds{} by Corollary~\ref{cor:Lnopeq}.

This implies that if $\mathcal{V}$ is a monoidal \icat{} compatible with small
colimits, the restriction
\[
\begin{split}
\tau^{*}_{X_{0},X_{1},X_{2}} \colon \Alg_{\Dop_{X_{0},X_{1},X_{2}}}(\mathcal{V})
& \to \Alg_{\Lop_{X_{0},X_{1},X_{2}}}(\mathcal{V}) \\ &\simeq
\Alg_{\Dop_{X_{0},X_{1}}}(\mathcal{V})
\times_{\Alg_{\Dop_{X_{1}}}(\mathcal{V})}
\Alg_{\Dop_{X_{1},X_{2}}}(\mathcal{V})  
\end{split}
\] has a fully faithful left
adjoint $\tau_{X_{0},X_{1},X_{2},!}$ for all spaces
$X_{0},X_{1},X_{2}$. If $\mathcal{C}_{i}$ is a $\mathcal{V}$-\icat{}
with space of objects $X_{i}$ for $i = 0,1,2$, and we have a
$\mathcal{C}_{0}$-$\mathcal{C}_{1}$-bimodule $M$ and a
$\mathcal{C}_{1}$-$\mathcal{C}_{2}$-bimodule $N$, the \emph{composite}
$\mathcal{C}_{0}$-$\mathcal{C}_{2}$-bimodule $M
\otimes_{\mathcal{C}_{1}} N$ is the restriction to
$\Dop_{X_{0},X_{2}}$ of the $\Dop_{X_{0},X_{1},X_{2}}$-algebra
obtained by applying $\tau_{X_{0},X_{1},X_{2},!}$ to the
$\Lop_{X_{0},X_{1},X_{2}}$-algebra corresponding to $M$ and $N$.

\begin{remark}
  Let $\BMcat^{2}(\mathcal{V}) \to \mathcal{S}^{\times 3}$ be the
  Cartesian fibration associated to the functor $(\mathcal{S}^{\times
    3})^{\op} \to \CatI$ that sends $(X,Y,Z)$ to $\Alg_{\Dop_{X,Y,Z}}(\mathcal{V})$.
  The left adjoints $\tau_{X_{0},X_{1},X_{2},!}$ combine to give a
  fully faithful left adjoint to the restriction
  \[ \BMcat^{2}(\mathcal{V}) \to \BMcat(\mathcal{V})
  \times_{\Algcat(\mathcal{V})} \BMcat(\mathcal{V}).\]
  Combining this with the appropriate projection $\BMcat^{2}(\mathcal{V}) \to
  \BMcat(\mathcal{V})$ we get a composition functor
  \[\BMcat(\mathcal{V}) \times_{\Algcat(\mathcal{V})}
  \BMcat(\mathcal{V}) \to \BMcat(\mathcal{V}).\]
\end{remark}

Now we want to see that this composition of bimodules is given by
forming the expected colimit. The key observation is the following:
\begin{proposition}
  Given spaces $X,Y,Z$ and points $x \in X$ and $z \in Z$,
  let $\mathcal{X}_{x,z}$ be the \icat{} determined by the pullback
  square
  \nolabelcsquare{\mathcal{X}_{x,z}}{(\Lop_{X,Y,Z})^{\txt{act}}_{/(x,z)}}{\Dop}{(\Lop_{/[2]})^{\txt{act}}_{/(0,2)},}
  where the bottom horizontal map sends $[n]$ to the object
  $(0,1,\ldots,1,2)$ with $n+1$ copies of $1$. Then:
  \begin{enumerate}[(i)]
  \item There is an equivalence $\mathcal{X}_{x,z} \isoto \Dop_{Y}$.
  \item The top horizontal map $\mathcal{X}_{x,z} \to
    (\Lop_{X,Y,Z})^{\txt{act}}_{/(x,z)}$ is cofinal.
  \end{enumerate}
\end{proposition}
\begin{proof}
  The projection $(\Lop_{X,Y,Z})^{\txt{act}}_{/(x,z)} \to
  (\Lop_{/[2]})^{\txt{act}}_{/(0,2)}$ is a left fibration by
  Lemma~\ref{lem:LopXnactXifib}(i), hence so is the pullback
  $\mathcal{X}_{x,z} \to \Dop$. The functor taking a space $S$ to the
  left fibration $\Dop_{S} \to \Dop$ is right adjoint to the functor
  taking a left fibration over $\Dop$ to its fibre at $[0]$. Since the
  fibre of $\mathcal{X}_{x,y}$ at $[n]$ is $X_{/x} \times Y^{\times
    (n+1)} \times Z_{/z} \simeq Y^{\times (n+1)}$, the unit of this
  adjunction gives a map of left fibrations $\mathcal{X}_{x,z} \to
  \Dop_{Y}$. To see that this is an equivalence it suffices to show
  that the induced map on fibres at any $[n] \in \Dop$ is an
  equivalence, which is clear. This proves (i).

  The bottom horizontal map $\Dop \to
  (\Lop_{/[2]})^{\txt{act}}_{/(0,2)}$ is cofinal by
  \cite{nmorita}*{Lemma 5.7}. Combining \cite{HTT}*{Proposition
    4.1.2.15} with \cite{HTT}*{Remark 4.1.2.10} we know that the
  pullback of a cofinal map along a coCartesian fibration is cofinal,
  which proves (ii).
\end{proof}

From the definition of left operadic Kan extensions it therefore
follows that if $\mathcal{A}$, $\mathcal{B}$, and $\mathcal{C}$ are
$\mathcal{V}$-\icats{} with spaces of objects $X$, $Y$, and $Z$,
respectively, and we have an $\mathcal{A}$-$\mathcal{B}$-bimodule $M$
and a $\mathcal{B}$-$\mathcal{C}$-bimodule $N$, then the composite
$\mathcal{A}$-$\mathcal{C}$-bimodule $M \otimes_{\mathcal{B}} N$ is
given at $(x,z)$ by a $\Dop_{Y}$-indexed colimit we can informally
write as
\[ M \otimes_{\mathcal{B}} N \simeq \colim_{(y_{0},\ldots,y_{n}) \in
  \Dop_{Y}} M(x,y_{0}) \otimes \mathcal{B}(y_{0},y_{1}) \otimes \cdots
\otimes \mathcal{B}(y_{n-1},y_{n}) \otimes N(y_{n}, z).\] To relate
this to the expected geometric realization, we need a technical
observation:
\begin{proposition}\label{propn:doublecolimcoCart}
  Let $\mathcal{I}$ be an \icat{} and $p \colon \mathcal{I} \to \CatI$
  a functor with $\mathcal{K} \to \mathcal{I}$ its associated
  coCartesian fibration. Suppose $q \colon \mathcal{K} \to \mathcal{D}$
  is a functor such that for each $\alpha \in \mathcal{I}$ the diagram
  $q_{\alpha} \colon p(\alpha) \simeq \mathcal{K}_{\alpha}
  \to \mathcal{D}$ has a colimit; by \cite{HTT}*{Proposition 4.2.2.7}
  there exists an (essentially unique) map $q_{+} \colon
  \mathcal{K}_{+} \to \mathcal{D}$, where \[\mathcal{K}_{+} :=
  \mathcal{K}\times \Delta^{1} \amalg_{\mathcal{K} \times \{1\}}
  \mathcal{\mathcal{I}},\] that restricts to $q$ on $\mathcal{K}$ and
  to a colimit of $q_{\alpha}$ on $p(\alpha)^{\triangleright}
  \simeq \mathcal{K}_{+} \times_{\mathcal{I}}\{\alpha\}$. Then the
  maps \[\mathcal{D}_{q/} \leftarrow \mathcal{D}_{q_{+}/} \to
  \mathcal{D}_{q_{+}|_{\mathcal{I}}/}\] are trivial fibrations.
\end{proposition}
\begin{proof}
  The map $\mathcal{K} \times \{1\} \hookrightarrow \mathcal{K} \times
  \Delta^{1}$ is right anodyne by \cite{HTT}*{Corollary 2.1.2.7}, so
  the pushout $\mathcal{I} \to \mathcal{K}_{+}$ is also right anodyne
  and thus cofinal by \cite{HTT}*{Proposition 4.1.1.3}. Therefore
  $\mathcal{D}_{q_{+}/} \to \mathcal{D}_{q_{+}|_{\mathcal{I}}/}$ is a
  trivial fibration by \cite{HTT}*{Proposition 4.1.1.7}. On the other
  hand, since $\pi \colon \mathcal{K} \to \mathcal{I}$ is a coCartesian
  fibration, for each $i \in \mathcal{I}$ the inclusion
  \[\mathcal{K}_{i} \hookrightarrow \mathcal{K}_{/i} := \mathcal{K}
  \times_{\mathcal{I}} \mathcal{I}_{/i} \] is cofinal --- this follows
  by applying \cite{HTT}*{Theorem 4.1.3.1}, since for every object $(k,
  f\colon \pi(k) \to i) \in \mathcal{K}_{/i}$ the fibre
  $(\mathcal{K}_{i})_{(k,f)/}$ has an initial object given by the
  coCartesian map $k \to f_{!}k$ and so is weakly contractible. Thus
  $q_{+}$ is a left Kan extension of $q$ along $\mathcal{K}
  \hookrightarrow \mathcal{K}_{+}$, and hence $\mathcal{D}_{q_{+}/} \to
  \mathcal{D}_{q/}$ is a trivial fibration by \cite{HTT}*{Lemma
    4.3.2.7}.
\end{proof}

\begin{corollary}
  Let $q \colon \mathcal{K} \to \mathcal{D}$ be as above. If 
  the diagram $q$ has a colimit, we have an equivalence
  \[ \colim_{\mathcal{K}} q \simeq \colim_{\alpha \in \mathcal{I}}
  \colim_{p(\alpha)} q_{\alpha},\] where the functoriality in $\alpha$
  of the colimits over $p(\alpha)$ comes from the diagram $q_{+}$.
\end{corollary}
\begin{proof}
  Since the maps in Proposition~\ref{propn:doublecolimcoCart} are
  trivial fibrations compatible with the projections to $\mathcal{D}$,
  the colimit of $q$, which is the initial object of
  $\mathcal{D}_{q/}$, must project to an object of $\mathcal{D}$ that
  is equivalent to the projection of the initial object of
  $\mathcal{D}_{q_{+}|_{\mathcal{I}}/}$, which is a colimit of the
  diagram $\alpha \mapsto \colim_{p(\alpha)} q_{\alpha}$ induced by
  $q_{+}$.
\end{proof}

Since $\Dop_{Y} \to \Dop$ is a left fibration, we can apply this to
our $\Dop_{Y}$-indexed colimit for $(M \otimes_{\mathcal{B}} N)(x,z)$ to
conclude that, as we expected, this is equivalent to the geometric
realization of a simplicial diagram with $n$th term
  \[ \colim_{(y_{0},\ldots,y_{n}) \in Y^{\times (n+1)}} M(a,y_{0})
  \otimes \mathcal{B}(y_{0},y_{1}) \otimes \cdots \otimes
  \mathcal{B}(y_{n-1},y_{n}) \otimes N(y_{n}, c).\]

\section{The Double $\infty$-Category of Enriched $\infty$-Categories}\label{sec:double}
Now we get to the meat of this paper --- in this section we'll
construct a double \icat{} of $\mathcal{V}$-\icats{}, in the form of a
simplicial \icat{} whose value at $[0]$ is $\Algcat(\mathcal{V})$ and
at $[1]$ is $\BMcat(\mathcal{V})$, with the composition 
\[ \BMcat(\mathcal{V}) \times_{\Algcat(\mathcal{V})}
\BMcat(\mathcal{V}) \to \BMcat(\mathcal{V}) \] given by the
construction we discussed in the previous section.

The basic objects we consider are again the natural many-object
versions of those we used in \cite{nmorita}:
\begin{definition}
  Given spaces $X_{0},\ldots,X_{n}$ we define
  $\Dop_{X_{0},\ldots,X_{n}} \to \Dopn$ to be the coCartesian
  fibration associated to the functor $\Dopn \to \mathcal{S}$ obtained
  by right Kan extension from the functor $\{(0),\ldots,(n)\} \to
  \mathcal{S}$ that sends $(i)$ to $X_{i}$.
\end{definition}
\begin{lemma}\label{lem:DopXndouble}
  The composite $\Dop_{X_{0},\ldots,X_{n}} \to \Dopn \to \Dop$ is a
  double \icat{} for all spaces $X_{0},\ldots,X_{n}$. 
\end{lemma}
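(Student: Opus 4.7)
The plan is to appeal to the equivalent characterization given just after Definition~\ref{defn:gnsiopd}: a functor $\mathcal{M}\to\Dop$ is a double \icat{} if and only if it is a coCartesian fibration whose straightening $\Dop\to\CatI$ satisfies the Segal condition. This splits the problem into two checks: that $\pi\colon\Dop_{X_{0},\ldots,X_{n}}\to\Dop$ is a coCartesian fibration, and that the associated simplicial \icat{} satisfies the Segal condition.

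For the first step I would observe that the slice projection $\simp_{/[n]}\to\simp$ is a Cartesian fibration: given $f\colon[k]\to[m]$ in $\simp$ and $\phi\colon[m]\to[n]$, the morphism $f$ itself is a Cartesian lift with target $\phi$ and source $\phi\circ f$. Passing to opposites, $\Dopn\to\Dop$ is a coCartesian fibration. The map $\Dop_{X_{0},\ldots,X_{n}}\to\Dopn$ is coCartesian by construction (in fact a left fibration, as its straightening lands in $\mathcal{S}$), so the composite $\pi$ is a coCartesian fibration.

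For the second step, the key is to identify the fiber $F_{m}$ of $\pi$ over $[m]\in\Dop$. Unwinding the right Kan extension along $\{(0),\ldots,(n)\}\hookrightarrow\Dopn$, maps from $\phi\colon[m]\to[n]$ to $(i)$ in $\simp_{/[n]}$ correspond to elements $j\in[m]$ with $\phi(j)=i$, so the right Kan extension assigns to $\phi$ the product $\prod_{j=0}^{m}X_{\phi(j)}$. Hence
\[ F_{m}\;\simeq\;\coprod_{\phi\colon[m]\to[n]}X_{\phi(0)}\times\cdots\times X_{\phi(m)}. \]
Under this description, the Segal map decomposes as a coproduct, indexed by nondecreasing sequences $0\leq i_{0}\leq\cdots\leq i_{m}\leq n$, of the evident identifications
\[ X_{i_{0}}\times\cdots\times X_{i_{m}}\;\simeq\;(X_{i_{0}}\times X_{i_{1}})\times_{X_{i_{1}}}\cdots\times_{X_{i_{m-1}}}(X_{i_{m-1}}\times X_{i_{m}}), \]
each of which is obviously an equivalence.

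The only step requiring care is matching the Segal map to this pointwise identification: one needs to confirm that the inert functoriality in $\Dop$, transported through the coCartesian lifts, indeed projects $\Dopn$-components along the inert maps $[1],[0]\to[m]$ in $\simp_{/[n]}$ and restricts a tuple $(x_{0},\ldots,x_{m})$ to the appropriate sub-tuples dictated by these inert maps. Beyond this essentially notational bookkeeping no genuine obstacle arises, and indeed the proof of Lemma~4.1.3 of \cite{enr} (which handles the case $n=0$) would serve as a template.
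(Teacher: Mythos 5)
Your argument is correct, and its computational core --- that the right Kan extension sends a list $(i_{0},\ldots,i_{m})$ to the product $X_{i_{0}}\times\cdots\times X_{i_{m}}$, so that the Segal maps become the evident equivalences --- is exactly the observation the paper makes. The difference is only in how the verification is packaged: the paper records this product formula as saying that the straightened functor $F\colon\Dopn\to\mathcal{S}$ is a $\Dopn$-category object (i.e.\ satisfies the Segal condition fibrewise over $\Dopn$) and then invokes \cite{enr}*{Proposition 3.5.4}, which states precisely that composing the associated left fibration with the coCartesian fibration $\Dopn\to\Dop$ yields a double \icat{}. You instead unpack that citation by hand: you check that the composite is a coCartesian fibration, identify its fibre over $[m]\in\Dop$ as $\coprod_{\phi\colon[m]\to[n]}X_{\phi(0)}\times\cdots\times X_{\phi(m)}$, and verify the Segal condition summand by summand. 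Both routes are sound; the paper's is shorter because the bookkeeping you flag at the end --- that the inert transition functors of the composite fibration restrict a tuple along the corresponding sub-interval of $\phi$ --- is exactly what the cited proposition disposes of once and for all, while your version has the virtue of being self-contained.
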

\begin{proof}
  The right Kan-extended functor $F\colon \Dopn \to \mathcal{S}$ clearly sends
  $(i_{0},\ldots,i_{n})$ to $X_{i_{0}} \times \cdots \times
  X_{i_{n}}$, and so is a $\Dopn$-category object in the sense of
  \cite{enr}*{Definition 3.5.2}, i.e. it satisfies the Segal condition
  \[ F(i_{0},\ldots,i_{n}) \isoto F(i_{0},i_{1}) \times_{F(i_{1})}
  \cdots \times_{F(i_{n-1}} F(i_{n-1},i_{n}).\]
  The composite coCartesian fibration $\DopXn \to \Dop$ is therefore a double
  \icat{} by \cite{enr}*{Proposition 3.5.4}.
\end{proof}

\begin{definition}
  If $\mathcal{C}$ is an \icat{}, let $\simp_{\mathcal{C}} \to \simp$
  be a Cartesian fibration associated to the functor $\simp^{\op} \to
  \CatI$ that is the right Kan extension of the functor $\{[0]\} \to
  \CatI$ sending $[0]$ to $\mathcal{C}$. (This functor sends $[n]$ to
  $\mathcal{C}^{\times (n+1)}$.)
\end{definition}
We now observe that the double \icats{}
$\DopXn$ combine to a functor
$\simp_{\mathcal{S}} \to \OpdInsg$:
\begin{definition}
  Let $i_{n} \colon \{0,\ldots,n\} \hookrightarrow \simp^{\op}_{/[n]}$
  be the functor that takes $j \in \{0,\ldots,n\}$ to the map $[0] \to [n]$ sending $0$
  to $j \in [n]$. (This is equivalent to the inclusion of the fibre of
  $\Dop_{/[n]}$ over $[0] \in \simp^{\op}$.) Right Kan extension along
  $i_{n}$ then determines a functor \[ \mathcal{S}^{\times (n+1)}
  \simeq \Fun(\{0,\ldots,n\}, \mathcal{S}) \to
  \Fun(\simp^{\op}_{/[n]}, \mathcal{S}),\] which is moreover natural
  in $[n] \in \simp^{\op}$. Using the equivalence between functors to
  $\mathcal{S}$ and left fibrations, we get a functor from
  $\mathcal{S}^{\times (n+1)}$ to left fibrations over
  $\simp^{\op}_{/[n]}$ that takes $(X_{0},\ldots,X_{n})$ to
  $\Dop_{X_{0},\ldots,X_{n}} \to \Dopn$. Since the resulting composite
  maps $\simp^{\op}_{X_{0},\ldots,X_{n}} \to \simp^{\op}$ are double
  \icats{}, we get functors $\mathcal{S}^{\times (n+1)} \to \OpdInsg$
  taking $(X_{0},\ldots,X_{n})$ to $\Dop_{X_{0},\ldots,X_{n}} \to
  \Dop$, natural in $n$. Using the universal property of
  $\simp_{\mathcal{S}}$ from \cite{freepres}*{Proposition 7.3} this
  corresponds to a functor $\simp_{\mathcal{S}} \to \OpdInsg$.
\end{definition}

\begin{definition}
  For any \gnsiopd{} $\mathcal{O}$, we let \[\ofALGcat'(\mathcal{O}) \to \simp_{\mathcal{S}}\] be a
  Cartesian fibration associated to the functor
  $(\simp_{\mathcal{S}})^{\op} \to \CatI$ given by
  \[(X_{0},\ldots,X_{n}) \mapsto
  \Alg_{\DopXn}(\mathcal{O}).\] Then we
  define $\ofALGcat(\mathcal{O}) \to \simp^{\op}$ to be a
  coCartesian fibration corresponding to the composite Cartesian
  fibration $\ofALGcat'(\mathcal{O}) \to \simp_{\mathcal{S}} \to
  \simp.$ In particular, over $[n] \in \simp^{\op}$ we have a
  Cartesian fibration $\ofALGcat(\mathcal{O})_{[n]}\to
  \mathcal{S}^{\times (n+1)}$.
\end{definition}

\begin{definition}
  For spaces $X_{0},\ldots,X_{n}$, we define a \gnsiopd{}
  $\LopXn$ by the pullback diagram
  \csquare{\LopXn}{\DopXn}{\Lbrnop}{\Dopn.}{\tau_{(X_{0},\ldots,X_{n})}}{}{}{\tau_{n}}
\end{definition}

We say that a $\DopXn$-algebra is \emph{composite} if it is the
operadic left Kan extension of its restriction to $\LopXn$. To
understand these operadic left Kan extensions we must check that the
map $\LopXn \to \DopXn$ is \emph{extendable}, in the sense of
Definition~\ref{defn:extendable}.

To prove this we first make the following technical observation:
\begin{lemma}\label{lem:LopXnactXifib}
  Suppose given spaces $X_{0},\ldots,X_{n}$, a morphism $\xi \colon [m] \to
  [n]$ in $\simp$, and $\Xi \in \Dop_{X_{0},\ldots,X_{n}}$
  over $\xi \in \Dopn$. Then:
  \begin{enumerate}[(i)]
  \item The projection
  \[ (\Lop_{X_{0},\ldots,X_{n}})^{\txt{act}}_{/\Xi} \to
  (\Lbrnop)^{\txt{act}}_{/\xi} \] is a left fibration.
\item For any cellular morphism $\eta \colon [k] \to [n]$ and active
  morphism $\phi \colon [m] \to [k]$ in $\simp$ such that $\xi =
  \eta\phi$, the fibre of this projection at $(\phi,\eta)$ is the
  pullback \nolabelcsquare{
    ((\Lop_{X_{0},\ldots,X_{n}})^{\txt{act}}_{/\Xi})_{(\phi,\eta)}}{\prod_{j
      = 0}^{k} X_{\eta(j)}}{\{\Xi\}}{\prod_{i = 0}^{m} X_{\xi(j)},}
  where the right vertical map sends $(p_{0},\ldots,p_{m})$ to
  $(p_{\phi(0)},\ldots,p_{\phi(m)})$.
  \end{enumerate}
\end{lemma}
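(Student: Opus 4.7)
My plan is to deduce both parts from standard stability properties of left fibrations, noting that by construction $\DopXn \to \Dopn$ is a left fibration (it classifies a functor $\Dopn \to \mathcal{S}$).

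For part (i), I would first pull back along $\Lbrnop \hookrightarrow \Dopn$ to see that $\LopXn \to \Lbrnop$ is a left fibration, and then further pull back along $(\Lbrnop)^{\txt{act}} \hookrightarrow \Lbrnop$. This latter pullback really does compute the active subcategory because a morphism in $\LopXn$ is \emph{active} exactly when its image in $\Lbrnop$ is active, both conditions being inherited from active morphisms in $\Dop$. Thus $(\LopXn)^{\txt{act}} \to (\Lbrnop)^{\txt{act}}$ is a left fibration, and slicing over any object of the total space preserves this property (left fibrations are stable under slicing), yielding (i).

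For part (ii), I would invoke the standard identification of fibres of a sliced left fibration: for a left fibration $p \colon E \to B$ and an object $e' \in E$ over $b' := p(e')$, the fibre of $E_{/e'} \to B_{/b'}$ over a morphism $f \colon b \to b'$ is equivalent to the homotopy fibre $E_{b} \times_{E_{b'}} \{e'\}$, where the map $E_{b} \to E_{b'}$ is the functor $f_{!}$ encoded by the left fibration. (This holds because any morphism $e \to e'$ in $E$ lying over $f$ factors essentially uniquely through the coCartesian lift from $e$, and the remaining data is a path in the Kan complex $E_{b'}$ from $f_{!}(e)$ to $e'$.) Applied to our left fibration from (i), the classifying functor $\Dopn \to \mathcal{S}$ is by construction the right Kan extension of $(i) \mapsto X_{i}$, so it sends $\eta \colon [k] \to [n]$ to $\prod_{j=0}^{k} X_{\eta(j)}$ and sends the morphism $\phi \colon \eta \to \xi$ (with $\eta \phi = \xi$) to the reindexing map $(p_{0},\ldots,p_{k}) \mapsto (p_{\phi(0)},\ldots,p_{\phi(m)})$. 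This immediately yields the pullback square in the statement.

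I do not anticipate a serious obstacle: both parts reduce to general facts about left fibrations together with a direct read-off of the classifying functor. The only point requiring a line of verification is that the active subcategory of $\LopXn$ really is the pullback of $(\Lbrnop)^{\txt{act}}$ along $\LopXn \to \Lbrnop$, which is immediate from the definition of active morphisms as those whose image in $\Dop$ is active.
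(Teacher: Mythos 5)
Your approach is essentially the one the paper takes: both parts come down to the stability of left fibrations under pullback and under slicing, plus reading off the fibres from the classifying functor. One step needs reordering, however. You propose to obtain (i) by first restricting to $(\LopXn)^{\txt{act}} \to (\Lbrnop)^{\txt{act}}$ and then ``slicing over an object of the total space''; but $\Xi$ lies over an arbitrary $\xi \colon [m] \to [n]$, which need not be cellular, so $\Xi$ is in general \emph{not} an object of $\LopXn$, and $(\LopXn)^{\txt{act}}_{/\Xi}$ is not a slice of $(\LopXn)^{\txt{act}}$ over one of its own objects --- it is the pullback $(\LopXn)^{\txt{act}} \times_{(\DopXn)^{\txt{act}}} (\DopXn)^{\txt{act}}_{/\Xi}$. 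The fix is to perform the two operations in the opposite order, which is exactly what the paper does: slice the left fibration $(\DopXn)^{\txt{act}} \to (\Dopn)^{\txt{act}}$ over $\Xi$ (which \emph{is} an object there) to get a left fibration $(\DopXn)^{\txt{act}}_{/\Xi} \to (\Dopn)^{\txt{act}}_{/\xi}$ by HTT Proposition 2.1.2.1, and then pull back along $(\Lbrnop)^{\txt{act}}_{/\xi} \hookrightarrow (\Dopn)^{\txt{act}}_{/\xi}$; a short diagram chase with Cartesian squares identifies this pullback with $(\LopXn)^{\txt{act}}_{/\Xi}$. With that adjustment your argument for (i) coincides with the paper's, and your identification of the fibres in (ii) --- using that the classifying functor is right Kan extended from $\{(0),\ldots,(n)\}$, hence sends $\eta$ to $\prod_{j} X_{\eta(j)}$ with reindexing functoriality --- is precisely what the paper means by saying (ii) follows from the definition of the left fibration $\DopXn \to \Dopn$.
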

\begin{proof}
  First consider the commutative diagram
  \[
  \begin{tikzcd}
    (\LopXn)^{\txt{act}}_{/\Xi} \arrow{r} \arrow{d} & (\DopXn)^{\txt{act}}_{/\Xi}
    \arrow{d} \\
    (\LopXn)^{\txt{act}} \arrow{r} \arrow{d}  &
    (\DopXn)^{\txt{act}} \arrow{d} \\
    (\Lbrnop)^{\txt{act}} \arrow{r} & (\Dopn)^{\txt{act}}.
  \end{tikzcd}
  \]
  Here the top square is Cartesian by the definition of
  $(\LopXn)^{\txt{act}}_{/\Xi}$, and it follows immediately from the
  definition of $\LopXn$ that the bottom square is also
  Cartesian. Thus the composite square is also Cartesian.

  Now consider the diagram
  \[
  \begin{tikzcd}
    (\LopXn)^{\txt{act}}_{/\Xi} \arrow{r} \arrow{d} & (\DopXn)^{\txt{act}}_{/\Xi}
    \arrow{d} \\
    (\Lbrnop)^{\txt{act}}_{/\xi} \arrow{r} \arrow{d}  &
    (\Dopn)^{\txt{act}}_{/\xi} \arrow{d} \\
    (\Lbrnop)^{\txt{act}} \arrow{r} & (\Dopn)^{\txt{act}}.
  \end{tikzcd}
  \]
  Here the bottom square is Cartesian by the definition of
  $(\Lbrnop)^{\txt{act}}_{/\xi}$, hence since the composite square is
  Cartesian so is the top square.

  The projection $\DopXn \to \Dopn$ is by definition a left fibration,
  hence so is the restriction $(\DopXn)^{\txt{act}} \to
  (\Dopn)^{\txt{act}}$ to the active maps, since this can be described
  as the pullback along $\Dop_{\txt{act}} \to \Dop$. The projection
  $(\DopXn)_{/\Xi}^{\txt{act}} \to (\Dnop)_{/\xi}^{\txt{act}}$ is
  therefore a left fibration by \cite{HTT}*{Proposition 2.1.2.1},
  hence so is the pullback $(\LopXn)^{\txt{act}}_{/\Xi} \to (\Lopn)^{\txt{act}}_{/\xi}$. This
  proves (i).

  (ii) then follows immediately from the definition of the left
  fibration $\DopXn \to \Dopn$.
\end{proof}

\begin{proposition}\label{propn:LopXnDopXnextble}
  For any spaces $X_{0},\ldots,X_{n}$, the inclusion $\LopXn \to
  \DopXn$ is extendable.
\end{proposition}
\begin{proof}
  We must show that for any $\Xi \in \DopXn$ (lying over $\xi \colon
  [m] \to [n]$), the map
  \[ (\LopXn)^{\txt{act}}_{/\Xi} \to \prod_{p = 1}^{m}
  (\LopXn)^{\txt{act}}_{/\rho_{p}^{*}\Xi} \]
  is cofinal. Consider the commutative square
   \nolabelcsquare{(\LopXn)^{\txt{act}}_{/\Xi}}{\prod_{p = 1}^{m}
  (\LopXn)^{\txt{act}}_{/\rho_{p}^{*}\Xi}}{(\Lbrnop)^{\txt{act}}_{/\xi}}{\prod_{p = 1}^{m}
  (\Lbrnop)^{\txt{act}}_{/\rho_{p}^{*}\xi.}}
  Here the vertical maps are left fibrations by
  Lemma~\ref{lem:LopXnactXifib}(i), and the bottom horizontal map is
  cofinal by \cite{nmorita}*{Proposition 5.6}. \cite{HTT}*{Proposition
    4.1.2.15} together with \cite{HTT}*{Remark 4.1.2.10} implies that
  the pullback of a cofinal map along a coCartesian fibration is
  cofinal, so to show that the top horizontal map is cofinal it's
  enough to prove that this square is Cartesian. Since the vertical
  maps are left fibrations, for this it suffices to show that the
  induced map on the fibres at any $(\phi,\eta) \in
  (\Lbrnop)^{\txt{act}}_{/\xi}$ is an equivalence, which is clear from
  the description of the fibres in
  Lemma~\ref{lem:LopXnactXifib}(ii).
\end{proof}

Applying \cite{nmorita}*{Theorem 4.40} (which itself is a minor
variation of \cite{HA}*{Theorem 3.1.2.3}), we get:
\begin{corollary}
  Suppose $\mathcal{V}$ is a monoidal \icat{} compatible with small
  colimits. Then for any spaces $X_{0},\ldots,X_{n}$ the
  restriction \[\tau_{X_{0},\ldots,X_{n}}^{*} \colon
  \Alg_{\DopXn}(\mathcal{V}) \to \Alg_{\LopXn}(\mathcal{V})\] has a
  fully faithful left adjoint $\tau_{X_{0},\ldots,X_{n},!}$.
\end{corollary}

\begin{definition}
  Suppose $\mathcal{V}$ is a monoidal \icat{} compatible with small
  colimits. We say a $\DopXn$-algebra $\mathcal{M}$ in $\mathcal{V}$ is
  \emph{composite} if it lies in the image of
  $\tau_{X_{0},\ldots,X_{n},!}$, or equivalently if the counit
  map $\tau_{X_{0},\ldots,X_{n},!}\tau_{X_{0},\ldots,X_{n}}^{*}\mathcal{M}
  \to \mathcal{M}$ is an equivalence.  
\end{definition}

\begin{definition}
  Suppose $\mathcal{V}$ is a monoidal \icat{} compatible with small
  colimits. Let $\fALGcat(\mathcal{V})$ denote the full subcategory of
  $\ofALGcat(\mathcal{V})$ spanned by the composite $\DopXn$-algebras
  for all spaces $X_{0},\ldots,X_{n}$.
\end{definition}

\begin{remark}
  The natural transformations $\LopXn \to \DopXn$ induce a map of
  Cartesian fibrations \[\tau_{n}^{*} \colon \ofALGcat(\mathcal{V})_{n}
  \to \ofALGcat^{\bbLambda}(\mathcal{V})_{n}\] over
  $\mathcal{S}^{\times (n+1)}$, where
  $\ofALGcat^{\bbLambda}(\mathcal{V})_{n} \to \mathcal{S}^{\times
    (n+1)}$ is a Cartesian fibration associated to the functor
  $(\mathcal{S}^{\times (n+1)})^{\op} \to \CatI$ that sends
  $(X_{0},\ldots,X_{n})$ to $\Alg_{\LopXn}(\mathcal{V})$.  If
  $\mathcal{V}$ is compatible with small colimits, the fibrewise left
  adjoints $\tau_{X_{0},\ldots,X_{n},!}$ then combine to give a left
  adjoint $\tau_{n,!} \colon \ofALGcat^{\bbLambda}(\mathcal{V})_{n}
  \to \ofALGcat(\mathcal{V})_{n}$ by \cite[Proposition 7.3.2.6]{HA},
  and we can define $\fALGcat(\mathcal{V})_{n}$ to be the image of
  $\tau_{n,!}$. In particular, the projection
  $\fALGcat(\mathcal{V})_{[n]} \to \mathcal{S}^{\times (n+1)}$ is
  still a Cartesian fibration.
\end{remark}

Next we need to show that the projection $\fALGcat(\mathcal{V}) \to
\Dop$ is a coCartesian fibration. This will follow from an
extension of \cite{nmorita}*{Proposition 5.16}:
\begin{definition}
  Recall that for $\phi \colon [m] \to [n]$ in $\simp$, we say that a
  morphism $\alpha \colon [k] \to [n]$ is \emph{$\phi$-cellular} if 
  \begin{enumerate}[(1)]
  \item for $i$ such that $\alpha(i) < \phi(0)$ we have $\alpha(i+1)
    \leq \alpha(i)+1$,
  \item for $i$ such that $\phi(j) \leq \alpha(i) < \phi(j+1)$ we have
    $\alpha(i+1) \leq \phi(j+1)$,
  \item for $i$ such that $\alpha(i) \geq \phi(m)$ we have
    $\alpha(i+1) \leq \alpha(i)+1$.
  \end{enumerate}
  We write $\Lopn[\phi]$ for the full subcategory of $\Dop_{n}$
  spanned by the $\phi$-cellular maps to $[n]$, and for spaces
  $X_{0},\ldots,X_{n}$ we define $\LopXn[\phi]$ by the pullback square
\nolabelcsquare{\LopXn[\phi]}{\DopXn}{\Lopn[\phi]}{\Dopn.}
\end{definition}
\begin{proposition}\label{propn:phicellcof}
  For any $\phi \colon [m] \to [n]$ and $\Gamma \in
  \Dop_{X_{\phi(0)},\ldots,X_{\phi(m)}}$ over $\phi \colon [k] \to [m]
  \in \Dop_{/[m]}$, the map
  \[ (\Lop_{X_{\phi(0)},\ldots,X_{\phi(m)}})^{\txt{act}}_{/\Gamma} \to
  (\Lop_{X_{0},\ldots,X_{n}}[\phi])^{\txt{act}}_{/\phi_{*}\Gamma} \]
  is cofinal.
\end{proposition}
\begin{proof}
  Consider the commutative square
  \nolabelcsquare{(\Lop_{X_{\phi(0)},\ldots,X_{\phi(m)}})^{\txt{act}}_{/\Gamma}}{(\Lop_{X_{0},\ldots,X_{n}}[\phi])^{\txt{act}}_{/\phi_{*}\Gamma}}{(\Lop_{/[m]})^{\txt{act}}_{/\gamma}}{(\Lbrnop[\phi])^{\txt{act}}_{/\phi\gamma}.}
  The proof of Lemma~\ref{lem:LopXnactXifib} clearly extends to the
  $\phi$-cellular case, so the vertical maps here are left fibrations
  and the bottom horizontal map is cofinal by
  \cite{nmorita}*{Proposition 5.16}. \cite{HTT}*{Proposition 4.1.2.15}
  together with \cite{HTT}*{Remark 4.1.2.10} implies that the pullback
  of a cofinal map along a coCartesian fibration is cofinal, so to
  show that the top horizontal map is cofinal it's enough to prove
  that this square is Cartesian. Since the vertical maps are left
  fibrations, for this it suffices to show that the induced map on the
  fibres at any object of $(\Lop_{/[m]})^{\txt{act}}_{/\gamma}$ is an
  equivalence, which is clear from the description of the fibres in
  Lemma~\ref{lem:LopXnactXifib}(ii).
\end{proof}

\begin{corollary}\label{cor:ALGcatcoCart}
  The restricted projection $\fALGcat(\mathcal{V}) \to \Dop$ is a
  coCartesian fibration.
\end{corollary}
\begin{proof}
  This follows, using Proposition~\ref{propn:phicellcof}, by exactly
  the same proof as that of \cite{nmorita}*{Corollary 5.17}.
\end{proof}

It follows that $\fALGcat(\mathcal{V}) \to \Dop$ determines a functor
$\Dop \to \CatI$. We want to show that $\fALGcat(\mathcal{V})$ is a
double \icat{}, i.e. that this functor satisfies the Segal
condition. We'll deduce this from the following observation:

\begin{proposition}\label{propn:propereqs}
  Let $\mathfrak{P} = (\mathcal{C}, S, \{K_{\alpha}^{\triangleleft}\to
  \mathcal{C}\})$ be a categorical pattern, in the sense of
  \cite{HA}*{B.0.19}, and suppose $f \colon A \to B$ is a trivial
  cofibration in the model category $(\sSet^{+})_{\mathfrak{P}}$ of
  \cite{HA}*{Theorem B.0.20} of one of the following kinds:
  \begin{enumerate}[(a)]
  \item $(\Lambda^{n}_{i}, T) \hookrightarrow (\Delta^{n}, T)$, where
    $T$ consists of the degenerate edges together with $(i-1) \to i$,
    for all $0 < i < n$.
  \item $(\partial \Delta^{n} \star K_{\alpha}, T) \hookrightarrow
    (\Delta^{n} \star K_{\alpha}, T)$, where $T$ consists of the
    non-degenerate edges together with all edges in $K_{\alpha}$ and
    all maps $n \to k$ for $k \in K_{\alpha}$.
  \end{enumerate}
  Then for any pullback diagram \csquare{X \times_{B}
    A}{X}{A}{B}{f'}{g'}{g}{f} in $(\sSet^{+})_{\mathfrak{P}}$ where
  $g$ is a fibration, the morphism $f' \colon X \times_{B} A \to X$ is
  a weak equivalence.
\end{proposition}
\begin{proof}
  In the case (a) this follows from \cite[Lemma 2.4.4.6]{HA} and in
  the case (b) it holds by the same argument as in the proofs of
  \cite[Lemmas 2.4.4.4 and 2.4.4.5]{HA}.
\end{proof}

\begin{definition}
  As in \cite{nmorita}*{\S 5.2}, we write $\simp^{\amalg,\op}_{/[n]}$
  for the ordinary colimit $\Dop_{/[1]} \amalg_{\Dop_{/[0]}} \cdots
  \amalg_{\Dop_{/[0]}} \Dop_{/[1]}$ in (marked) simplicial sets (over
  $\Dop$), with $n$ copies of $\Dop_{/[1]}$. For any Kan complexes
  $X_{0}, \ldots, X_{n}$, we then define
  $\simp^{\amalg,\op}_{X_{0},\ldots,X_{n}}$ by the (ordinary) pullback
  \nolabelcsquare{\simp^{\amalg,\op}_{X_{0},\ldots,X_{n}}}{\LopXn}{\simp^{\amalg,\op}_{/[n]}}{\Lbrnop}
  in $(\sSet^{+})_{\mathfrak{O}_{\txt{ns}}^{\txt{gen}}}$.
\end{definition}

\begin{lemma}\label{lem:amalgXcolim}
  For any Kan complexes $X_{0},\ldots,X_{n}$, the marked simplicial
  set $\simp^{\amalg,\op}_{X_{0},\ldots,X_{n}}$ is equivalent to the
  homotopy colimit $\simp^{\op}_{X_{0},X_{1}}
  \amalg_{\simp^{\op}_{X_{1}}} \cdots \amalg_{\simp^{\op}_{X_{n-1}}}
  \simp^{\op}_{X_{n-1},X_{n}}$, i.e. the corresponding iterated
  pushout in the \icat{} $\OpdInsg$.
\end{lemma}
\begin{proof}
  Pullbacks in $\sSet^{+}$ preserve colimits, so since
  $\simp^{\amalg,\op}$ is a colimit we may identify
  $\simp^{\amalg,\op}_{X_{0},\ldots,X_{n}}$ with the corresponding (ordinary)
  colimit. But since this colimit can be written as an iterated
  pushout along cofibrations, this colimit is a homotopy colimit.
\end{proof}

\begin{corollary}\label{cor:Lnopeq} 
  For any spaces $X_{0},\ldots,X_{n}$, the inclusion
  \[\simp^{\amalg,\op}_{X_{0},\ldots,X_{n}} \hookrightarrow
  \LopXn \]
  is a trivial cofibration in
  $(\sSet^{+})_{\mathfrak{O}_{\txt{ns}}^{\txt{gen}}}$.
\end{corollary}
\begin{proof}
  The proof of \cite[Proposition 5.10]{nmorita} implies that the
  inclusion $\simp^{\amalg,\op}_{/[n]} \hookrightarrow \Lbrnop$ is a
  transfinite composite of pushouts of the morphisms described in
  Proposition~\ref{propn:propereqs}. Since $\sSet^{+}$ is locally
  Cartesian closed, it follows that for any spaces $X_{0}$, \ldots,
  $X_{n}$, the inclusion $\simp^{\amalg,\op}_{X_{0},\ldots,X_{n}}
  \hookrightarrow \LopXn$ is a transfinite
  composite of pushouts along pullbacks of such maps. Since the
  projection $\LopXn \to \Lbrnop$ is a fibration
  in this model structure, it follows from
  Proposition~\ref{propn:propereqs} that this map is a trivial
  cofibration.
\end{proof}

\begin{corollary}\label{cor:Segcond}
  Let $\mathcal{V}$ be a monoidal \icat{} compatible with small
  colimits. Then the Segal map
  \[ \fALGcat(\mathcal{V})_{n} \to \fALGcat(\mathcal{V})_{1}
  \times_{\fALGcat(\mathcal{V})_{0}} \cdots
  \times_{\fALGcat(\mathcal{V})_{0}} \fALGcat(\mathcal{V})_{1}\]
  is an equivalence of \icats{}.
\end{corollary}
\begin{proof}
  This is a map of Cartesian fibrations over $\mathcal{S}^{\times
    (n+1)}$, so it suffices to show that for all spaces
  $X_{0},\ldots,X_{n}$ the induced map on fibres over
  $(X_{0},\ldots,X_{n})$ is an equivalence. But this map can be
  identified with the composite
  \[ 
  \begin{split}
    (\fALGcat(\mathcal{V})_{n})_{(X_{0},\ldots,X_{n})} &  \to
  \Alg_{\LopXn}(\mathcal{V}) \\ & \to
  \Alg_{\simp^{\amalg,\op}_{X_{0},\ldots,X_{n}}}(\mathcal{V}) \\
&  \to
  \Alg_{\Dop_{X_{0},X_{1}}}(\mathcal{V})
  \times_{\Alg_{\Dop_{X_{1}}}(\mathcal{V})}\cdots\times_{\Alg_{\Dop_{X_{n-1}}}(\mathcal{V})}   \Alg_{\Dop_{X_{n-1},X_{n}}}(\mathcal{V}),
  \end{split}
  \]
  where the first map is an equivalence by definition, the second by
  Corollary~\ref{cor:Lnopeq}, and the third by
  Lemma~\ref{lem:amalgXcolim}. 
\end{proof}

Combining Corollary~\ref{cor:Segcond} with
Corollary~\ref{cor:ALGcatcoCart}, we have proved:
\begin{theorem}
  Let $\mathcal{V}$ be a monoidal \icat{} compatible with small
  colimits. Then the projection $\fALGcat(\mathcal{V}) \to \Dop$ is a
  double \icat{}.
\end{theorem}

\begin{definition}
  We say a $\DopXn$-algebra $\mathcal{M}$ in $\mathcal{V}$ is
  \emph{complete} if for each $i = 0,\ldots,n$ the
  $\simp^{\op}_{X_{i}}$-algebra $\sigma^{*}_{i}\mathcal{M}$ is
  complete, where $\sigma_{i} \colon [0] \to [n]$ is the map sending
  $0$ to $i$. We define $\fCAT(\mathcal{V})$ to be the full
  subcategory of $\fALGcat(\mathcal{V})$ spanned by the complete
  composite $\DopXn$-algebras for all spaces $X_{0},\ldots,X_{n}$.
\end{definition}

\begin{corollary}
  The projection $\fCAT(\mathcal{V}) \to \Dop$ is a double \icat{}.
\end{corollary}
\begin{proof}
  To see that $\fCAT(\mathcal{V}) \to \Dop$ is coCartesian, it
  suffices to observe that given a complete $\DopXn$-algebra $M \colon \DopXn \to
  \mathcal{V}^{\otimes}$ and a morphism $\phi
  \colon [m] \to [n]$ in $\simp$, the target
  $\phi^{*}M \colon \Dop_{X_{\phi(0)},\ldots,X_{\phi(m)}} \to
  \mathcal{V}^{\otimes}$ of the coCartesian morphism over $\phi$ with
  source $M$ is also complete, since
  $\sigma_{i}^{*}\phi^{*}M \simeq \sigma_{\phi(i)}^{*}M$ and this is
  complete.
  
  To see that it is moreover a double \icat{}, observe that
 \[ \fCAT(\mathcal{V})_{1} \simeq \CatIV
  \times_{\fALGcat(\mathcal{V})_{0}} \fALGcat(\mathcal{V})_{1}
  \times_{\fALGcat(\mathcal{V})_{0}} \CatIV,\]
  and so under the
  identification of Corollary~\ref{cor:Segcond}, the full subcategory
  $\fCAT(\mathcal{V})_{n}$ of $\fALGcat(\mathcal{V})_{n}$ precisely corresponds to the iterated fibre  product
  \[ \fCAT(\mathcal{V})_{1} \times_{\fCAT(\mathcal{V})_{0}} \cdots
  \times_{\fCAT(\mathcal{V})_{0}} \fCAT(\mathcal{V})_{1}.\qedhere\]
\end{proof}

\section{Natural Transformations}\label{sec:nattr}
In this section we consider the obvious definition of \emph{natural
transformations} between functors of enriched \icats{}. We then
use this to construct \icats{} of functors and show that these are the
underlying \icats{} of the internal Hom when this exists.

\begin{definition}
  We may regard the categories $[n]$ as (levelwise discrete) Segal
  spaces, and thus as $\mathcal{S}$-enriched \icats{} via the
  equivalence of \cite{enr}*{Theorem 4.4.7}. If $\mathcal{V}$ is a
  monoidal \icat{} compatible with small colimits, then
  $\Algcat(\mathcal{V})$ is tensored over $\Algcat(\mathcal{S})$ by
  \cite{enr}*{Corollary 4.3.17}, so for any $\mathcal{V}$-\icat{}
  $\mathcal{C}$ we have $\mathcal{V}$-\icats{} $\mathcal{C} \otimes
  [n]$\footnote{In fact, we may define $\mathcal{C} \otimes [n]$ as a
    $\mathcal{V}$-\icat{} provided only that $\mathcal{V}$ has an
    initial object and this is compatible with the tensor product, but
    we will not need this generality.}.
  If $f,g \colon \mathcal{C} \to \mathcal{D}$ are functors of
  $\mathcal{V}$-\icats{}, a \emph{natural transformation} from $f$ to
  $g$ is a functor $\eta \colon \mathcal{C} \otimes [1] \to
  \mathcal{D}$ such that $\eta \circ (\id_{\mathcal{C}} \otimes d_{1})
  \simeq f$ and $\eta \circ (\id_{\mathcal{C}} \otimes d_{0}) \simeq g$.
\end{definition}

Given this definition of natural transformations, there is an obvious
simplicial space that should be the \icat{} of $\mathcal{V}$-functors
between two $\mathcal{V}$-\icats{}:
\begin{definition}
  Suppose $\mathcal{C}$ and $\mathcal{D}$ are
  $\mathcal{V}$-\icats{}. We let $\Fun_{\mathcal{V}}(\mathcal{C},
  \mathcal{D})$ denote the simplicial space $\Dop \to \mathcal{S}$ sending
  $[n]$ to $\Map_{\CatIV}(\mathcal{C} \otimes [n], \mathcal{D})$.
\end{definition}

Our first goal in this section is to check that this is indeed a Segal
space, and that it's complete if the target is a complete $\mathcal{V}$-\icat{}:
\begin{proposition}\label{propn:[n]coSegal}
  Let $\mathcal{V}$ be a monoidal \icat{} compatible with small
  colimits, and let $\mathcal{C}$ and $\mathcal{D}$ be
  $\mathcal{V}$-\icats{}. 
  \begin{enumerate}[(i)]
  \item The simplicial space $\Fun_{\mathcal{V}}(\mathcal{C}, \mathcal{D})$
  is a Segal space.
  \item For any Segal space $X$ we have a natural equivalence
    \[ \Map_{\SegI}(X, \FunV(\mathcal{C}, \mathcal{D})) \simeq
    \Map_{\AlgCat(\mathcal{V})}(\mathcal{C} \otimes X, \mathcal{D}),\]
    where on the right we regard $X$ as an $\mathcal{S}$-\icat{}.
  \item The underlying groupoid object $\iota \FunV(\mathcal{C},
    \mathcal{D})$ of the Segal space $\FunV(\mathcal{C}, \mathcal{D})$
    is $\Map(\mathcal{C} \otimes E^{\bullet}, \mathcal{D})$. (Thus the
    ``correct'' space of objects of $\FunV(\mathcal{C}, \mathcal{D})$
    is the geometric realization $|\Map(\mathcal{C} \otimes E^{\bullet}, \mathcal{D})|$.)
  \item If $\mathcal{D}$ is a complete $\mathcal{V}$-\icat{}, then the
    Segal space $\FunV(\mathcal{C}, \mathcal{D})$ is
    complete.
  \end{enumerate}
\end{proposition}
\begin{proof}
  Tensoring $\mathcal{V}$-\icats{} with $\mathcal{S}$-\icats{}
  preserves colimits in each variable by \cite{enr}*{Corollary
    4.3.17}, so to prove (i) it suffices to show that the $\mathcal{S}$-\icats{}
  $[n]$ form a coSegal object, i.e. that the natural maps $[1]
  \amalg_{[0]} \cdots \amalg_{[0]} [1] \to [n]$ are
  equivalences in $\Algcat(\mathcal{S})$. 

  Recall from \cite{enr}*{\S 3.3} that there is a free-forgetful
  adjunction between $\mathcal{S}$-\icats{} and
  \emph{$\mathcal{S}$-graphs}, where an $\mathcal{S}$-graph with space
  of objects $X$ is just a
  functor $X \times X \to \mathcal{S}$. Let $\mathcal{G}_{n}$ denote the $\mathcal{S}$-graph
  with objects $\{0, \ldots, n\}$ and \[\mathcal{G}_{n}(i,j) =
  \begin{cases}
    *, & i < j \\
    \emptyset, & j \geq i.
  \end{cases}\] Then it is easy to see that $[n]$ is the free
  $\mathcal{S}$-\icat{} on the graph $\mathcal{G}_{n}$. Moreover, it
  is obvious that the map $\mathcal{G}_{1} \amalg_{\mathcal{G}_{0}}
  \cdots \amalg_{\mathcal{G}_{0}} \mathcal{G}_{1} \to \mathcal{G}_{n}$
  is an equivalence of $\mathcal{S}$-graphs. Since the formation of free
  $\mathcal{S}$-\icats{} preserves colimits, this implies that
  $[\bullet]$ is a coSegal object, which proves (i).

  Every Segal space can be canonically written as a colimit of a
  diagram of the objects $[n]$. Specifically, the Segal space
  $X$ is the coend of 
  \[ \bar{X} \colon \simp \times \simp^{\op} \to \SegI, \qquad ([n], [m])
  \mapsto \colim_{X_{m}} [n].\]
  Since $\Map([n], \FunV(\mathcal{C}, \mathcal{D})) \simeq
  \Map(\mathcal{C} \otimes [n], \mathcal{D})$ we then have
  \[ 
  \begin{split}
    \Map(X, \FunV(\mathcal{C}, \mathcal{D})) & \simeq \Map(\colim_{\txt{Tw}(\simp)}
  \bar{X}, \FunV(\mathcal{C}, \mathcal{D}))\\
  & \simeq \lim_{\txt{Tw}(\simp)}\Map(\bar{X},
  \FunV(\mathcal{C}, \mathcal{D})) \\
  & \simeq \lim_{\txt{Tw}(\simp)} \Map(\mathcal{C} \otimes
  \bar{X}, \mathcal{D}) \\
  & \simeq \Map(\mathcal{C} \otimes X,
  \mathcal{D}),
  \end{split}
  \]
  which proves (ii).

  The underlying groupoid object of a Segal space $X$ is
  $\Map(E^{\bullet}, X)$. By (ii), the underlying groupoid object of
  $\FunV(\mathcal{C},\mathcal{D})$ is therefore $\Map(\mathcal{C}
  \otimes E^{\bullet}, \mathcal{D})$, which proves (iii). The Segal
  space $\FunV(\mathcal{C}, \mathcal{D})$ is complete \IFF{} the
  canonical map from the colimit of this simplicial space to
  $\FunV(\mathcal{C},\mathcal{D})_{0}$ is an equivalence. By
  \cite{enr}*{Corollary 5.5.10} we know that if $\mathcal{D}$ is
  complete then \[|\iota\FunV(\mathcal{C}, \mathcal{D})| \simeq
  \Map(\mathcal{C}, \mathcal{D}) \simeq
  \FunV(\mathcal{C},\mathcal{D})_{0},\]
  thus $\FunV(\mathcal{C},\mathcal{D})$ is complete.
\end{proof}

Now suppose $\mathcal{V}$ is a presentably symmetric monoidal \icat{},
i.e. a symmetric monoidal \icat{} compatible with small colimits such
that the underlying \icat{} is presentable. Then by
\cite{enr}*{Corollary 4.3.16} and \cite{enr}*{Proposition 5.7.16} the
\icats{} $\AlgCatV$ and $\CatIV$ are also presentably symmetric
monoidal. This implies that $\AlgCatV$ and $\CatIV$ have internal Hom
objects; we write $\mathcal{D}^{\mathcal{C}}$ for the internal Hom for
maps $\mathcal{C} \to \mathcal{D}$ in $\AlgCatV$.

Let's check that the underlying \icat{} of the internal Hom
$\mathcal{D}^{\mathcal{C}}$ is precisely the functor \icat{}
$\FunV(\mathcal{C}, \mathcal{D})$:
\begin{proposition}
  Let $\mathcal{V}$ be a presentably symmetric monoidal \icat{}, and
  suppose $\mathcal{C}$ and $\mathcal{D}$ are $\mathcal{V}$-\icats{}.
  \begin{enumerate}[(i)]
  \item If $\mathcal{D}$ is complete, then the $\mathcal{V}$-\icat{}
    $\mathcal{D}^{\mathcal{C}}$ is complete for any $\mathcal{C}$. Moreover,
    $\mathcal{D}^{\mathcal{C}}$ is also the internal Hom in $\CatIV$.
  \item Write $t \colon
    \mathcal{S} \to \mathcal{V}$ for the unique colimit-preserving
    strong monoidal functor sending $*$ to the unit $I$; by
    \cite{enr}*{Proposition A.81} this has a lax monoidal right
    adjoint $u \colon \mathcal{V} \to \mathcal{S}$ given by $\Map(I,
    \blank)$. Then $\Map(E^{0}\otimes [\bullet], \mathcal{C})$ is the
    Segal space corresponding to the $\mathcal{S}$-\icat{}
    $u_{*}\mathcal{C}$.
  \item  The Segal space corresponding to the
    $\mathcal{S}$-\icat{} $u_{*}\mathcal{D}^{\mathcal{C}}$ underlying
    the internal Hom is 
    $\Fun^{\mathcal{V}}(\mathcal{C}, \mathcal{D})$.
  \end{enumerate}
\end{proposition}

\begin{proof}
  To prove (i), we must show that $\Map(E^{0},
  \mathcal{D}^{\mathcal{C}}) \to \Map(E^{1},
  \mathcal{D}^{\mathcal{C}})$ is an equivalence. Passing to left
  adjoints this is $\Map(\mathcal{C}, \mathcal{D}) \to \Map(E^{1}
  \otimes \mathcal{C}, \mathcal{D})$, which is an equivalence since
  $\mathcal{C} \otimes E^{1} \to \mathcal{C}$ is a local equivalence
  by \cite{enr}*{Proposition 4.45}.

  Since $\mathcal{D}^{\mathcal{C}}$ is complete we have, for any
  complete $\mathcal{V}$-\icat{} $\mathcal{A}$,
  \[
  \begin{split}
  \Map_{\CatIV}(\mathcal{A}, \mathcal{D}^{\mathcal{C}}) & \simeq
  \Map_{\AlgCatV}(\mathcal{A}, \mathcal{D}^{\mathcal{C}}) \simeq 
  \Map_{\AlgCatV}(\mathcal{A} \otimes \mathcal{C}, \mathcal{D}) \\ & \simeq 
  \Map_{\CatIV}(\mathcal{A} \otimes \mathcal{C}, \mathcal{D}),  
  \end{split}
  \]
  hence $\mathcal{D}^{\mathcal{C}}$ is also the internal hom in
  $\CatIV$.

  To prove (ii), observe that the Segal space corresponding to $u_{*}\mathcal{C}$
  is \[\Map_{\Algcat(\mathcal{S})}([n], u_{*}\mathcal{C}) \simeq
  \Map_{\Algcat(\mathcal{V})}(t_{*}[n], \mathcal{C}) \simeq
  \Map_{\Algcat(\mathcal{V})}(E^{0} \otimes [n], \mathcal{C}).\]

  Thus the Segal space associated to $u_{*}\mathcal{C}^{\mathcal{D}}$ is
  given by \[\Map_{\Algcat(\mathcal{V})}(E^{0}\otimes [\bullet], \mathcal{D}^{\mathcal{C}})
  \simeq \Map_{\Algcat(\mathcal{V})}(\mathcal{C} \otimes [\bullet],
  \mathcal{D}) \simeq \FunV(\mathcal{C}, \mathcal{D}).\qedhere\]
\end{proof}

\section{The $(\infty,2)$-Category of Enriched
  $\infty$-Categories}\label{sec:infty2}

The double \icat{} $\fALGcat(\mathcal{V})$ has two underlying
$(\infty,2)$-categories: one where the 1-morphisms are bimodules, and
one where they are functors; we write $\ALGcat(\mathcal{V})$ for the latter. Our
goal in this section is to show that the 2-morphisms in $\ALGcat(\mathcal{V})$ can
be identified with natural transformations, as we defined them in the
previous section. More precisely, we'll show:
\begin{proposition}\label{propn:FunVismaps}
  Let $\mathcal{C}$ and $\mathcal{D}$ be $\mathcal{V}$-\icats{}. There
  is a natural equivalence
  \[ \ALGcat(\mathcal{V})(\mathcal{C}, \mathcal{D}) \simeq
  \Fun_{\mathcal{V}}(\mathcal{C}, \mathcal{D}).\]
\end{proposition}

\begin{remark}
  The two underlying $(\infty,2)$-categories of a double \icat{}
  $\mathcal{X} \to \Dop$ can be described as follows: If $F \colon
  \Dop \to \CatI$ is the associated functor, then regarding $\CatI$ as
  the \icat{} of complete Segal spaces allows us to think of $F$ as a
  double Segal space $\Dop \times \Dop \to \mathcal{S}$. Now we can
  take the underlying 2-fold Segal space in either direction, which
  amounts to replacing $F_{nm}$ by its pullback along the degeneracy
  map from $F_{00}$ to $F_{n0}$ or $F_{m0}$ --- see
  \cite{spans}*{Proposition 2.13} for a precise statement.

\end{remark}

If $X$ is a space, let's abbreviate $\Dop_{X}[n] := \Dop_{X,\ldots,X}
\to \Dopn$. Let $\pi_{n} \colon [n] \to [0]$ be the unique map; this
induces maps $\pi_{n,*} \colon \Dop_{X}[n] \to \Dop_{X}$ and thus
$\pi_{n}^{*} \colon \fALGcat(\mathcal{V})_{0} \to
\fALGcat(\mathcal{V})_{n}$. If $\mathcal{C}$ and $\mathcal{D}$ are
$\mathcal{V}$-\icats{}, the \icat{} of maps from $\mathcal{C}$ to
$\mathcal{D}$ in $\ALGcat(\mathcal{V})$ can be identified with the Segal space
  \[ [n] \mapsto
  \Map_{\fALGcat(\mathcal{V})_{n}}(\pi_{n}^{*}\mathcal{C}, \pi_{n}^{*}\mathcal{D}).\]
Our goal is then to show that there is a natural equivalence between
the spaces $\Map_{\fALGcat(\mathcal{V})_{n}}(\pi_{n}^{*}\mathcal{C},
\pi_{n}^{*}\mathcal{D})$ and $\Map_{\Algcat(\mathcal{V})}(\mathcal{C}
\otimes [n], \mathcal{D})$. To do this we'll relate the
$\Dop_{X}[n]$-algebra $\pi_{n}^{*}\mathcal{C}$ to the $\Dop_{X \times
\{0,\ldots,n\}}$-algebra $\mathcal{C} \otimes [n]$.

Let $I_{n}\colon \Dopn \to \Dop_{\{0,\ldots,n\}}$ denote the obvious
inclusion, given on objects by sending $\phi \colon [m] \to [n]$ to
the list $(\phi(0),\ldots,\phi(m))$, and write $\xi_{n}$ for the
projection $\Dop_{\{0,\ldots,n\}} \to \Dop$. Then for any space $X$ we
have a commutative diagram
  \[
  \begin{tikzcd}
    \Dop_{X}[n] \arrow[bend left]{rr}{\pi_{n}} \arrow{d} \arrow{r}{I_{n}}& \Dop_{X \times \{0,\ldots, n\}} \arrow{d} \arrow{r}{\xi_{n}}& \Dop_{X} \arrow{d}\\
    \Dopn \arrow[bend right]{rr}{\pi_{n}}\arrow{r}{I_{n}}& \Dop_{\{0,\ldots,n\}} \arrow{r}{\xi_{n}}& \Dop,
  \end{tikzcd}
  \]
where both squares are Cartesian, and the maps are all morphisms of \gnsiopds{}.

\begin{lemma}\label{lem:DopXnext}
  For any space $X$, the inclusion $I_{n} \colon \DopX[n] \to \Dop_{X
    \times \{0,\ldots, n\}}$ is extendable.
\end{lemma}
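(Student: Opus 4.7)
Following the template of Proposition~\ref{propn:LopXnDopXnextble} and Proposition~\ref{propn:phicellcof}, I would reduce the extendability of $I_{n}$ to a cofinality statement entirely at the simplicial level. For $\Xi \in \Dop_{X \times \{0,\ldots,n\}}$ over $[m] \in \Dop$, with image $\bar\Xi$ in $\Dop_{\{0,\ldots,n\}}$, the plan is to set up the commutative square
\[
\begin{tikzcd}
(\DopX[n])^{\txt{act}}_{/\Xi} \arrow{r} \arrow{d} & \prod_{p=1}^{m} (\DopX[n])^{\txt{act}}_{/\rho_{p}^{*}\Xi} \arrow{d} \\
(\Dopn)^{\txt{act}}_{/\bar\Xi} \arrow{r} & \prod_{p=1}^{m} (\Dopn)^{\txt{act}}_{/\rho_{p}^{*}\bar\Xi}
\end{tikzcd}
\]
arising from the Cartesian square $\DopX[n] \simeq \Dopn \times_{\Dop_{\{0,\ldots,n\}}} \Dop_{X \times \{0,\ldots,n\}}$. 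An adaptation of Lemma~\ref{lem:LopXnactXifib} shows that the vertical maps are left fibrations and that the square is Cartesian. By the usual argument (\cite{HTT}*{Proposition 4.1.2.15} together with \cite{HTT}*{Remark 4.1.2.10}) --- pullback of a cofinal map along a coCartesian fibration is cofinal --- it then suffices to show that the bottom horizontal map is cofinal.

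The plan for the bottom map is to observe that it is in fact an \emph{equivalence} of \icats{}, with inverse given by concatenation. Specifically, an object of the lower-left is a factorization $\bar\Xi = \psi \circ g$ with $\psi : [k] \to [n]$ monotone and $g : [m] \to [k]$ active; restriction along each $\rho_{p}$ picks out $\psi_{p} := \psi|_{[g(p-1),g(p)]}$ (suitably reindexed) as a monotone map from $\bar\Xi(p-1)$ to $\bar\Xi(p)$, and conversely the $\psi_{p}$'s can be glued along their common boundary values $\bar\Xi(p)$ to recover $(\psi,g)$ uniquely. The same restriction-and-glue recipe works at the level of active morphisms, producing a strict inverse. (If $\bar\Xi$ is itself not monotone then both sides are empty and cofinality holds vacuously, so only the case $\bar\Xi \in \Dopn$ is of genuine interest.)

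The main technical obstacle I anticipate is adapting Lemma~\ref{lem:LopXnactXifib} to this setting, where $\Dop_{X \times \{0,\ldots,n\}}$ lies over $\Dop$ rather than over $\Dopn$ as $\DopXn$ does; concretely, one has to verify that the slice $(\DopX[n])^{\txt{act}}_{/\Xi}$ sits in the expected Cartesian square over its $\Dopn$-shadow as a left fibration, using that $\DopX[n] \to \Dopn$ is the pullback of the left fibration $\Dop_{X \times \{0,\ldots,n\}} \to \Dop_{\{0,\ldots,n\}}$. Once this is in hand the rest is mechanical: the top map becomes a pullback of an equivalence along a coCartesian fibration, hence cofinal (indeed, itself an equivalence).
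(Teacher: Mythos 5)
Your argument is correct, but it takes a genuinely different and much heavier route than the paper, whose entire proof is a two-line observation: for $\Xi \in \Dop_{X \times \{0,\ldots,n\}}$, the slice $(\DopX[n])^{\txt{act}}_{/\Xi}$ is \emph{empty} if $\Xi$ is not in the image of $I_{n}$ (any object of $\DopX[n]$ has weakly increasing $\{0,\ldots,n\}$-coordinates, and an active map preserves this, so nothing in the image can map actively to a non-monotone $\Xi$), and has $(\Xi,\id_{\Xi})$ as a \emph{final object} if it is. Since monotonicity fails exactly on some consecutive pair, in the empty case every factor $(\DopX[n])^{\txt{act}}_{/\rho_{p}^{*}\Xi}$ is empty too, and in the non-empty case the comparison map sends the final object to the final object of the product; either way it is cofinal. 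Your route --- pulling the cofinality question down the left fibration to the base map $(\Dopn)^{\txt{act}}_{/\overline{\Xi}} \to \prod_{p}(\Dopn)^{\txt{act}}_{/\rho_{p}^{*}\overline{\Xi}}$ and showing that map is the concatenation isomorphism --- does go through: the square you write is Cartesian because $\DopX[n] \simeq \Dopn \times_{\Dop_{\{0,\ldots,n\}}} \Dop_{X \times \{0,\ldots,n\}}$, the analogue of Lemma~\ref{lem:LopXnactXifib} holds with the same proof, and your restrict-and-glue description of the inverse is accurate (including the vacuous non-monotone case). What your approach buys is uniformity with Proposition~\ref{propn:LopXnDopXnextble} and Proposition~\ref{propn:phicellcof}, where the fibrewise reduction is genuinely needed because the base cofinality is a nontrivial siftedness statement; what it costs is re-deriving, via a three-step scaffold, a fact whose content here is simply that $I_{n}$ is a full inclusion whose image is closed under active maps out, so that every relevant slice is empty or has a terminal object. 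The ``main technical obstacle'' you flag is real but routine --- and avoidable entirely.
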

\begin{proof}
  For $\xi \in \Dop_{X \times \{0,\ldots, n\}}$, the \icat{}
  $\DopX[n]^{\txt{act}}_{/\xi}$ is empty if $\xi \notin
  \DopX[n]$, or has a final object if $\xi \in \DopX[n]$. The
  extendability condition follows immediately from this.
\end{proof}

Suppose $\mathcal{V}$ is a monoidal \icat{} compatible with small
colimits, and let $\mathcal{C}$ be a $\mathcal{V}$-\icat{} with space
of objects $X$. Tensoring $\mathcal{C}$ with the map $[n] \to [0]$ we
get a map $\mathcal{C} \otimes [n] \to \mathcal{C}$ lying over the projection
$X \times \{0,\ldots,n\} \to X$. Thus this gives a map $\mathcal{C}
\otimes [n] \to \xi_{n}^{*}\mathcal{C}$ of $\Dop_{X \times
  \{0,\ldots,n\}}$-algebras.
\begin{lemma}
  In the situation above, the induced map \[I_{n}^{*}(\mathcal{C}
  \otimes [n]) \to I_{n}^{*}\xi_{n}^{*} \simeq
  \pi_{n}^{*}\mathcal{C}\]
  is an equivalence.
\end{lemma}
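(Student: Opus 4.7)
The plan is to verify the equivalence pointwise, exploiting the Segal-type decomposition of both $\Dop_X[n]$-algebras. Both source and target are algebras for the generalized non-symmetric $\infty$-operad $\Dop_X[n]$, so their values on any object lying over $[m] \in \Dop$ decompose, via the inert maps $\rho_k \colon [1] \to [m]$ and the coCartesian structure of $\mathcal{V}^{\otimes} \to \Dop$, into the values on the consecutive binary pieces lying over $[1]$. Consequently, a morphism of $\Dop_X[n]$-algebras is an equivalence iff it is so on all fibers over $[0]$ and $[1]$, and on the $[0]$-fibers this is automatic since both sides take values in $(\mathcal{V}^{\otimes})_{[0]} \simeq *$.

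For the $[1]$-fiber, an object of $\Dop_X[n]$ over $[1]$ is a triple $(\phi, x, x')$ with $\phi \colon [1] \to [n]$ (equivalently a pair $0 \leq \phi(0) \leq \phi(1) \leq n$) and $x, x' \in X$; $I_n$ sends this to $((x, \phi(0)), (x', \phi(1))) \in \Dop_{X \times \{0,\ldots,n\}}$. Then $\pi_n^*\mathcal{C}$ takes the value $\mathcal{C}(x, x')$, while $I_n^*(\mathcal{C} \otimes [n])$ takes the mapping object $(\mathcal{C} \otimes [n])((x, \phi(0)), (x', \phi(1)))$. Using the explicit formula for the tensor product in $\Algcat(\mathcal{V})$ together with the colimit-preserving strong monoidal $t \colon \mathcal{S} \to \mathcal{V}$ (sending $*$ to the unit $I$), this mapping object is $\mathcal{C}(x, x') \otimes t([n](\phi(0), \phi(1)))$; since $\phi$ is order-preserving, $[n](\phi(0), \phi(1)) \simeq *$ in $\mathcal{S}$, so $t([n](\phi(0), \phi(1))) \simeq I$, and the object identifies with $\mathcal{C}(x, x') \otimes I \simeq \mathcal{C}(x, x')$.

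The comparison map is obtained by tensoring $\id_{\mathcal{C}}$ with the unique map $[n] \to [0]$ in $\Algcat(\mathcal{S})$; on the $[1]$-fiber mapping object above it is induced by $[n](\phi(0), \phi(1)) \to [0](0, 0)$, which (since $\phi(0) \leq \phi(1)$) is the identity $* \to *$, so the induced map of tensor products is an equivalence. The role of restricting along $I_n$ is precisely to enforce the monotonicity $\phi(0) \leq \phi(1)$: without it, on the pairs with $\phi(0) > \phi(1)$ we would have $[n](\phi(0), \phi(1)) = \emptyset$ and the comparison $\emptyset \to \mathcal{C}(x, x')$ would fail. I do not expect a serious obstacle; the only real care needed is to be clear that the Segal-type decomposition of algebras is implicitly being applied in $\mathcal{V}^{\otimes}$, and that the formula for the mapping objects of $\mathcal{C} \otimes [n]$ really follows from the fact that the tensoring of $\Algcat(\mathcal{V})$ over $\Algcat(\mathcal{S})$ preserves colimits in each variable and that $[n]$ is the free $\mathcal{S}$-\icat{} on the graph with a point from $i$ to $j$ when $i \leq j$.
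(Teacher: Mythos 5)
Your proposal is correct and takes essentially the same route as the paper: the paper's proof is the one-line observation that it suffices to check that $(\mathcal{C}\otimes [n])((x,i),(y,j)) \to \mathcal{C}(x,y)$ is an equivalence for $i \leq j$, which is exactly your reduction to mapping objects followed by the computation $(\mathcal{C}\otimes[n])((x,i),(y,j)) \simeq \mathcal{C}(x,y)\otimes t([n](i,j)) \simeq \mathcal{C}(x,y)$. You simply spell out the two implicit ingredients (conservativity of restriction to the underlying graph, and the formula for the tensoring over $\Algcat(\mathcal{S})$) that the paper leaves to the reader.
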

\begin{proof}
  It suffices to observe that for $i \leq j$ and any $x, y \in X$
  the morphism \[(\mathcal{C}\otimes [n])((x,i),(y,j)) \to
  \mathcal{C}(x,y)\] is an equivalence.
\end{proof}

This map therefore has an inverse $\pi_{n}^{*}\mathcal{C} \to
I_{n}^{*}(\mathcal{C} \otimes [n])$, and since $I_{n}^{*}$ has a left
adjoint by Lemma~\ref{lem:DopXnext} there is a natural map
$I_{n,!}\pi_{n}^{*}\mathcal{C} \to \mathcal{C} \otimes [n]$ of
$\Dop_{X\times\{0,\ldots,n\}}$-algebras. 
\begin{proposition}\label{propn:CtensornIn}
  In the situation above, the morphism
  $I_{n,!}\pi_{n}^{*}\mathcal{C} \to \mathcal{C} \otimes [n]$ is an
  equivalence.
\end{proposition}
\begin{proof}
  Again observe that for $\xi \in \Dop_{X \times \{0,\ldots, n\}}$, the \icat{}
  $\DopX[n]^{\txt{act}}_{/\xi}$ is empty if $\xi \notin
  \DopX[n]$, or has a final object if $\xi \in \DopX[n]$. By the
  definition of left operadic Kan extensions we therefore see that for
  $x,y \in X$ and $i,j \in \{0,\ldots,n\}$ we have
  \[ I_{n,!}\pi_{n}^{*}\mathcal{C}((x,i), (y,j)) \simeq
  \begin{cases}
    \emptyset, & i > j \\
    \mathcal{C}(x,y), & i \leq j.
  \end{cases}
  \]
  The forgetful functor from $\Dop_{X \times \{0,\ldots,
    n\}}$-algebras to functors $(X \times \{0,\ldots,n\})^{\times
    2}\to \mathcal{V}$ is conservative by \cite{enr}*{Lemma A.5.5}, so
  this completes the proof.
\end{proof}

Now consider the algebra fibration $\Alg(\mathcal{V}) \to
\OpdInsg$. Since $\fALGcat(\mathcal{V})_{n}$ is pulled back from this,
if $\mathcal{C}$ and $\mathcal{D}$ are $\mathcal{V}$-\icats{} with
spaces of objects $X$ and $Y$, respectively, then we have a pullback square
\nolabelcsquare{\Map_{\fALGcat(\mathcal{V})_{n}}(\pi_{n}^{*}\mathcal{C},
  \pi_{n}^{*}\mathcal{D})}{\Map_{\Alg(\mathcal{V})}(\pi_{n}^{*}\mathcal{C},
  \pi_{n}^{*}\mathcal{D})}{\Map_{\mathcal{S}}(X, Y)^{\times
    (n+1)}}{\Map_{\OpdInsg}(\DopX[n], \Dop_{Y}[n]).}
By Proposition~\ref{propn:CtensornIn} we also have a natural
equivalence \[\Map_{\Alg(\mathcal{V})}(\pi_{n}^{*}\mathcal{C},
\pi_{n}^{*}\mathcal{D}) \simeq
\Map_{\Alg(\mathcal{V})}(I_{n,!}\pi_{n}^{*}\mathcal{C},
\xi_{n}^{*}\mathcal{D}).\]
Moreover, if we consider the diagram
\[
\begin{tikzcd}
  \Map'(\mathcal{C} \otimes [n], \xi_{n}^{*}\mathcal{D}) 
  \arrow{d} \arrow{r}& \Map(\mathcal{C} \otimes [n], \xi_{n}^{*}\mathcal{D})
  \arrow{d} \arrow{r}& \Map(\mathcal{C} \otimes [n], \mathcal{D}) \arrow{d}\\
  \Map(X^{\amalg (n+1)}, Y) \arrow{r}& \Map(X^{\amalg (n+1)}, Y^{\amalg (n+1)}) \arrow{r}& \Map(X^{\amalg (n+1)}, Y),
\end{tikzcd}
\]
where the left square is defined to be a pullback square, then the
top composite map is an equivalence: since the bottom composite map
is the identity, it suffices to check that the map is an equivalence
on each fibre, which is clear. Thus we have identified both 
$\Map_{\Alg(\mathcal{V})}(\pi_{n}^{*}\mathcal{C},
\pi_{n}^{*}\mathcal{D})$ and $\Map(\mathcal{C} \otimes [n],
\mathcal{D})$ with the same pullback, which completes the proof of
Proposition~\ref{propn:FunVismaps}.

\begin{corollary}
  Let $\CATIV$ be the underlying 2-fold Segal space of
  $\fCAT(\mathcal{V})$ with functors as 1-morphisms. Then this 2-fold
  Segal space is complete.
\end{corollary}
\begin{proof}
  The underlying Segal space
  of $\CATIV$ is that associated to the \icat{} $\CatIV$, and so is
  complete. Using the completeness criterion of
  \cite{spans}*{Theorem 5.18} it then suffices to show that the Segal
  space $\CATIV(\mathcal{C}, \mathcal{D})$ of maps from $\mathcal{C}$
  to $\mathcal{D}$ is complete for all complete $\mathcal{V}$-\icats{}
  $\mathcal{C}$ and $\mathcal{D}$, which follows from combining
  Proposition~\ref{propn:FunVismaps} and Proposition~\ref{propn:[n]coSegal}.
\end{proof}

\section{Functoriality and Monoidal Structures}\label{sec:func}
In this section we consider the functoriality in $\mathcal{V}$ of the
double \icat{} of $\mathcal{V}$-\icats{}. Here we restrict ourselves
to the ``algebraic'' or pre-localized case of the double \icats{}
$\fALGcat(\mathcal{V})$ --- since composition with a
colimit-preserving monoidal functor does not usually preserve complete
objects (cf. \cite{enr}*{\S 5.7}), to establish functoriality for the
double \icats{} $\fCAT(\mathcal{V})$ we must first show that the
\icat{} of $\mathcal{C}$-$\mathcal{D}$-bimodules in $\mathcal{V}$ is
invariant under fully faithful and essentially surjective functors of
$\mathcal{C}$ and $\mathcal{D}$. This result is most naturally proved
as a consequence of the Yoneda Lemma (in the form of the
representability of the \icat{} of bimodules), and so we postpone it
to a sequel to this paper.

\begin{definition}
  In the previous section we constructed a functor $
  \simp_{\mathcal{S}} \to \OpdInsg$ that sends $(X_{0},\ldots,X_{n})$ to
  $\DopXn$. Combining this with the algebra functor \[\Alg \colon (\OpdInsg)^{\op}
  \times \OpdInsg \to \CatI,\] we get a functor
  $(\simp_{\mathcal{S}})^{\op} \times \OpdInsg \to \CatI$ that sends
  the object $((X_{0},\ldots,X_{n}), \mathcal{O})$ to
  $\Alg_{\DopXn}(\mathcal{O})$. Let $\ofALGcat^{\vee} \to \simp_{\mathcal{S}}
  \times (\OpdInsg)^{\op}$ be a Cartesian fibration associated to this
  functor, and then take $\ofALGcat \to \simp^{\op} \times \OpdInsg$ to
  be a coCartesian fibration associated to the composite \[\ofALGcat^{\vee} \to
  \simp_{\mathcal{S}}
  \times (\OpdInsg)^{\op} \to \simp \times (\OpdInsg)^{\op}.\]
\end{definition}

\begin{remark}
  The coCartesian fibration $\ofALGcat \to \simp^{\op} \times \OpdInsg$
  determines a functor $\Dop \times \OpdInsg \to \CatI$ or
  $\OpdInsg \to \Fun(\Dop, \CatI)$.
\end{remark}

\begin{definition}
  Let $\LCatI^{\txt{coC}}$ denote the \icat{} of (large) \icats{} with
  small colimits and colimit-preserving functors. This \icat{} has a tensor product, constructed in
  \cite{HA}*{\S 4.8.1}, such that a map $\mathcal{C} \otimes
  \mathcal{D} \to \mathcal{E}$ is equivalent to a map $\mathcal{C}
  \times \mathcal{D} \to \mathcal{E}$ that preserves colimits
  separately in each variable. Then the \icat{} $\txt{Mon}_{\infty}^{\txt{coC}} :=
  \Alg_{\mathbb{E}_{1}}^{\Sigma}(\LCatI^{\txt{coC}})$ of associative
  algebras with respect to this tensor product is the \icat{}
  of monoidal \icats{} compatible with small colimits and
  colimit-preserving monoidal functors.
\end{definition}

\begin{definition}
  There is a forgetful functor $\txt{Mon}_{\infty}^{\txt{coC}} \to
  \LOpdInsg$. Let $\ofALGcat' \to \simp^{\op} \times
  \txt{Mon}_{\infty}^{\txt{coC}}$ be defined by the pullback along
  this of the obvious variant of the coCartesian fibration $\ofALGcat$
  where we allow the targets to be large. Then we define $\fALGcat$ to be the full
  subcategory of $\ofALGcat'$ spanned by the objects of
  $\fALGcat(\mathcal{V})$ for all $\mathcal{V}$ in
  $\txt{Mon}_{\infty}^{\txt{coC}}$.
\end{definition}

\begin{proposition}
  The restricted projection $\fALGcat \to \simp^{\op} \times
  \txt{Mon}_{\infty}^{\txt{coC}}$ is a coCartesian fibration.
\end{proposition}
\begin{proof}
  It suffices to prove that if $f \colon \mathcal{V}^{\otimes} \to
  \mathcal{W}^{\otimes}$
  is a colimit-preserving monoidal functor then for every composite
  algebra $M \colon \DopXn \to
  \mathcal{V}^{\otimes}$,  the composite map
  $f_{*}M \colon \DopXn \to \mathcal{V}^{\otimes} \to
  \mathcal{W}^{\otimes}$
  is also a composite algebra. In other words, we must show that the diagram
  \csquare{\Alg_{\LopXn}(\mathcal{V})}{\Alg_{\LopXn}(\mathcal{W})}{\Alg_{\DopXn}(\mathcal{V})}{\Alg_{\DopXn}(\mathcal{W})}{f_*}{\tau_{X_{0},\ldots,X_{n},!}}{\tau_{X_{0},\ldots,X_{n},!}}{f_*}
  commutes. This is a special case of \cite{enr}*{Lemma A.4.7}.
\end{proof}

The coCartesian fibration $\fALGcat \to \simp^{\op} \times
\txt{Mon}_{\infty}^{\txt{coC}}$ determines a functor
$\txt{Mon}_{\infty}^{\txt{coC}} \times \Dop \to \CatI$, or
equivalently $\txt{Mon}_{\infty}^{\txt{coC}} \to \Fun(\Dop,
\CatI)$. This functor factors through the full subcategory
$\Cat(\CatI)$ spanned by the double \icats{}, giving:
\begin{corollary}
  The double \icat{} $\fALGcat(\mathcal{V})$ is functorial in
  colimit-preserving monoidal functors between monoidal \icats{}
  $\mathcal{V}$ compatible with small colimits, i.e. it is given by a
  functor $\txt{Mon}_{\infty}^{\txt{coC}} \to \Cat(\CatI)$.
\end{corollary}

Next we want to show that the functors $\ofALGcat(\blank)$ and
$\fALGcat(\blank)$ are lax monoidal. This requires a somewhat more
involved construction, as we want the lax monoidal structure to be
defined in terms of the ``external product'' $M \boxtimes N$ of $M
\colon \DopXn \to \mathcal{V}^{\otimes}$ and $N \colon
\Dop_{Y_{0},\ldots,Y_{n}} \to \mathcal{W}^{\otimes}$ given by
\[ 
\begin{split}
M \boxtimes N \colon \Dop_{X_{0} \times Y_{0},\ldots, X_{n} \times
  Y_{n}} & \simeq \DopXn \times_{\Dopn} \Dop_{Y_{0},\ldots,Y_{n}} \\ &
\to
\DopXn \times_{\Dop} \Dop_{Y_{0},\ldots,Y_{n}} \\ & \to
\mathcal{V}^{\otimes} \times_{\Dop} \mathcal{W}^{\otimes},
\end{split}\] which
means that we must consider for each $n$ the fibre product of
\gnsiopds{} over $\Dopn$ in a compatible manner.

\begin{lemma}\label{lem:cocartmonfibred}
  Suppose $\mathcal{C}$ is an \icat{} with finite colimits. Then the
  functor $\mathcal{C} \to \CatI$ sending $x$ to $\mathcal{C}_{x/}$
  lifts to a functor from $\mathcal{C}$ to symmetric monoidal \icats{}
  sending $x$ to $(\mathcal{C}_{x/})^{\amalg_{x}}$. Let
  $(\mathcal{C}_{\bullet/})^{\amalg} \to \mathcal{C} \times
  \Gop$ be the coCartesian fibration of \iopds{} induced by
  this; then the forgetful functors $\mathcal{C}_{x/} \to \mathcal{C}$
  give rise to a morphism of \iopds{} $(\mathcal{C}_{\bullet/})^{\amalg} \to
  \mathcal{C}^{\amalg}$.
\end{lemma}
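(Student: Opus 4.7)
The plan is to build a functor $\Phi\colon \mathcal{C} \to \CatI$ with $\Phi(c) = \mathcal{C}_{c/}$, lift it to a functor valued in symmetric monoidal \icats{} (with tensor given by coproduct), and then extract both the unstraightened coCartesian fibration of \iopds{} and the forgetful morphism. For the first step I would use the source-evaluation map $\txt{ev}_{0}\colon \mathcal{C}^{\Delta^{1}} \to \mathcal{C}$, whose fibre over $c$ is $\mathcal{C}_{c/}$. Because $\mathcal{C}$ has pushouts, $\txt{ev}_{0}$ is a coCartesian fibration: for $(c\xrightarrow{g} d) \in \mathcal{C}_{c/}$ and $f\colon c \to c'$, the pushout square
\[
\begin{tikzcd}
c \arrow[r, "g"] \arrow[d, "f"'] & d \arrow[d] \\
c' \arrow[r] & c' \amalg_{c} d
\end{tikzcd}
\]
is $\txt{ev}_{0}$-coCartesian, directly from the universal property of pushouts. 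Straightening yields $\Phi$, with $\Phi(f)$ the pushout functor $f_{!} \colon \mathcal{C}_{c/} \to \mathcal{C}_{c'/}$.

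Second, I would check that $\Phi$ factors through the (non-full) subcategory of $\CatI$ whose objects are \icats{} with finite coproducts and whose morphisms preserve finite coproducts. Each slice $\mathcal{C}_{c/}$ inherits finite colimits from $\mathcal{C}$ (computed as colimits of the underlying diagrams, with induced map from $c$), so it has initial object $\id_{c}$ and binary coproduct the pushout $d_{1}\amalg_{c} d_{2}$ under $c$. Pushout pasting gives
\[ c' \amalg_{c} (d_{1} \amalg_{c} d_{2}) \simeq (c' \amalg_{c} d_{1}) \amalg_{c'} (c' \amalg_{c} d_{2}), \]
so $f_{!}$ preserves binary coproducts, and it clearly sends $\id_{c}$ to $\id_{c'}$. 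Composing $\Phi$ with the coCartesian symmetric monoidal structure construction of \cite{HA}*{\S 2.4.3} --- which sends an \icat{} with finite coproducts to its coCartesian symmetric monoidal structure and a coproduct-preserving functor to a strong monoidal functor --- produces the desired lifted functor, with value $(\mathcal{C}_{c/})^{\amalg_{c}}$ at $c$. Transposing and unstraightening the resulting functor $\mathcal{C} \times \Gop \to \CatI$ gives the coCartesian fibration of \iopds{} $(\mathcal{C}_{\bullet/})^{\amalg} \to \mathcal{C} \times \Gop$.

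Finally, the forgetful functors $U_{c}\colon \mathcal{C}_{c/} \to \mathcal{C}$ are natural in $c$: for $f\colon c \to c'$, the square comparing $U_{c'} \circ f_{!}$ with $U_{c}$ is furnished by the pushout itself. Because colimits in $\mathcal{C}_{c/}$ are computed in $\mathcal{C}$, each $U_{c}$ preserves finite coproducts, so this natural transformation lies in the coproduct-preserving subcategory, and applying the coCartesian symmetric monoidal functor once more lifts it to a natural transformation from $c \mapsto (\mathcal{C}_{c/})^{\amalg_{c}}$ to the constant functor at $\mathcal{C}^{\amalg}$. Unstraightening yields the desired morphism of \iopds{} $(\mathcal{C}_{\bullet/})^{\amalg} \to \mathcal{C}^{\amalg}$ over $\Gop$. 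The main obstacle is the initial coCartesian fibration claim for $\txt{ev}_{0}$; this is standard once pushouts exist (cf.\ the discussion of relative colimits in \cite{HTT}*{\S 4.3.1}), and can be verified directly from the mapping-space characterization of coCartesian edges using the universal property of pushouts.
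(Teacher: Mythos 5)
Your overall route is the same as the paper's: the paper disposes of this lemma in one line by citing \cite{HA}*{Corollary 2.4.3.11} (the coCartesian symmetric monoidal structure on an \icat{} with finite coproducts) and \cite{HA}*{Proposition 2.4.3.16} (its functoriality), and your write-up is essentially an unwinding of what those citations do. The first two steps are fine: $\txt{ev}_{0} \colon \Fun(\Delta^{1},\mathcal{C}) \to \mathcal{C}$ is indeed a coCartesian fibration when $\mathcal{C}$ has pushouts, with coCartesian edges the pushout squares, and the pasting argument showing that $f_{!} = c' \amalg_{c}(-)$ preserves finite coproducts of the slices is correct (indeed $f_{!}$ is a left adjoint). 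This is exactly what is needed to lift $c \mapsto \mathcal{C}_{c/}$ to a functor into symmetric monoidal \icats{} and \emph{strong} monoidal functors.

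There is, however, a genuine error in the last step: the claim that ``colimits in $\mathcal{C}_{c/}$ are computed in $\mathcal{C}$, so each $U_{c}$ preserves finite coproducts'' is false. The coproduct of $d_{1}$ and $d_{2}$ in $\mathcal{C}_{c/}$ is the pushout $d_{1}\amalg_{c} d_{2}$, not $d_{1}\amalg d_{2}$, and the initial object of $\mathcal{C}_{c/}$ is $\id_{c}$, not the initial object of $\mathcal{C}$ (compare pointed sets versus sets: the forgetful functor sends the wedge, not the disjoint union, to the coproduct in the slice). Only for diagrams indexed by \emph{weakly contractible} shapes are colimits in the slice created by the projection. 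Consequently $U_{c}$ does not induce a symmetric monoidal functor $(\mathcal{C}_{c/})^{\amalg_{c}} \to \mathcal{C}^{\amalg}$, and your strategy of ``applying the coCartesian symmetric monoidal functor once more'' to the natural transformation $U_{\bullet}$ does not go through. Fortunately the lemma only asserts a \emph{morphism of \iopds{}}, i.e.\ a lax symmetric monoidal functor, and for coCartesian structures \emph{every} functor between \icats{} with finite coproducts induces one (the lax structure maps $U_{c}(d_{1}) \amalg U_{c}(d_{2}) \to U_{c}(d_{1}\amalg_{c} d_{2})$ exist canonically); this is precisely the content of the functoriality statement \cite{HA}*{Proposition 2.4.3.16} that the paper invokes. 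So the conclusion stands, but the final step must be rerouted through that functoriality on arbitrary functors rather than through coproduct-preservation of $U_{c}$.
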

\begin{proof}
  Immediate from \cite{HA}*{Corollary 2.4.3.11} and
  \cite{HA}*{Proposition 2.4.3.16}.
\end{proof}

\begin{construction}\label{constr:1}
  We can apply Lemma~\ref{lem:cocartmonfibred} to $(\OpdInsg)^{\op}$
  to get a morphism of \iopds{}
  $((\OpdInsg)_{/\bullet}^{\op})^{\amalg} \to
  ((\OpdInsg)^{\op})^{\amalg}$. Combined with the lax monoidal algebra
  functor
\[ ((\OpdInsg)^{\op})^{\amalg} \times_{\Gop}
(\OpdInsg)^{\times} \to \CatI^{\times} \]
of \cite{nmorita}*{Definition 4.44}, this
gives a map of \iopds{}
\[ ((\OpdInsg)_{/\bullet}^{\op})^{\amalg} \times_{\Gop}
(\OpdInsg)^{\times} \to ((\OpdInsg)^{\op})^{\amalg} \times_{\Gop}
(\OpdInsg)^{\times} \to \CatI^{\times}, \]
with lax monoidal structure given, for $\mathcal{P},\mathcal{Q}$
\gnsiopds{} over $\mathcal{O}$, by 
\[ \Alg_{\mathcal{P}}(\mathcal{V}) \times
\Alg_{\mathcal{Q}}(\mathcal{V}) \to \Alg_{\mathcal{P} \times_{\Dop}
  \mathcal{Q}}(\mathcal{V}) \to \Alg_{\mathcal{P} \times_{\mathcal{O}}
  \mathcal{Q}}(\mathcal{V}).\]
\end{construction}

\begin{construction}\label{constr:2}
  The functors $\Dop_{\blank,\ldots,\blank} \colon \mathcal{S}^{\times
    (n+1)} \to (\OpdInsg)_{/\Dopn}$ preserve products, and considering
  their naturality in $[n] \in \simp$ this gives rise to a natural
  transformation $\Dop \times \Delta^{1} \to \CatI^{\txt{prod}}$ of
  functors from $\Dop$ to the \icat{} $\CatI^{\txt{prod}}$ of \icats{}
  with products (and product-preserving functors). Here the functor
  $\phi^{* }\colon (\OpdInsg)_{/\Dopn} \to (\OpdInsg)_{\Dop_{/[m]}}$
  induced by a map $\phi \colon [m] \to [n]$ in $\simp$ is given by
  taking pullbacks along $\phi_{*} \colon \Dop_{/[m]} \to \Dopn$.

  By \cite{HA}*{Corollary 2.4.1.9}, the \icat{} $\CatI^{\txt{prod}}$
  is equivalent to the full subcategory of the \icat{}
  $\Alg_{\Gop}^{\Sigma}(\CatI)$ of symmetric monoidal
  \icats{} (and symmetric monoidal functors) spanned by the Cartesian
  symmetric monoidal \icats{}. Thus we get a map \[\Dop \times
  \Delta^{1} \to \CatI^{\txt{prod}} \to
  \Alg_{\Gop}^{\Sigma}(\CatI).\]

  Since $(\blank)^{\op} \colon \CatI \to \CatI$ preserves products, it
  induces a functor \[(\blank)^{\op} \colon
  \Alg_{\Gop}^{\Sigma}(\CatI) \to
  \Alg_{\Gop}^{\Sigma}(\CatI).\] Combining this with the
  forgetful functor from symmetric monoidal \icats{} to coCartesian
  fibrations over $\Gop$, we get a functor
  \[ \Dop \times
  \Delta^{1} \to 
  \Alg_{\Gop}^{\Sigma}(\CatI)
  \xto{(\blank)^{\op}}   \Alg_{\Gop}^{\Sigma}(\CatI)
  \to \Cat_{\infty/\Gop}^{\txt{coCart}} \simeq
  \Fun(\Gop, \CatI). \]
  Equivalently, this is a functor $\Delta^{1} \to \Fun(\Dop \times
  \Gop, \CatI)$, or a morphism
  \nolabelopctriangle{\simp_{\mathcal{S}^{\op}}^{\op,\times}}{\Phi^{*}((\OpdInsg)_{/\bullet}^{\op})^{\amalg}}{\Dop
    \times \Gop,}
   of coCartesian fibrations over $\Dop \times \Gop$, where \[\Phi^{*}((\OpdInsg)_{/\bullet}^{\op})^{\amalg} \to \Dop
    \times \Gop\] denotes the pullback of
    $((\OpdInsg)_{/\bullet}^{\op})^{\amalg} \to (\OpdInsg)^{\op}
    \times \Gop$ along \[\Phi := \Dop_{(\blank)} \times \id \colon \Dop
    \times \Gop \to (\OpdInsg)^{\op}
    \times \Gop.\]
  Composing with the projection $\Dop \times
  \Gop \to \Gop$, we get in particular a morphism
  of generalized \iopds{}
  \[ \simp_{\mathcal{S}^{\op}}^{\op,\times} \to
  \Phi^{*}((\OpdInsg)_{/\bullet}^{\op})^{\amalg} \to
  ((\OpdInsg)_{/\bullet}^{\op})^{\amalg}.\] This sends an object
  $((X^{1}_{0},\ldots,X_{n}^{1}),\ldots,(X^{m}_{0},\ldots,X^{m}_{n})$
  in the fibre over $([n], \langle m \rangle)$ to
  $(\Dop_{X_{0}^{1},\ldots,X_{n}^{1}},\ldots,\Dop_{X_{0}^{m},\ldots,X_{n}^{m}})$.
\end{construction}

\begin{construction}\label{constr:3}
  Combining Constructions~\ref{constr:1} and \ref{constr:2}, we get a
  morphism of generalized \iopds{}
  \[\simp_{\mathcal{S}^{\op}}^{\op,\times} \times_{\Gop}
  (\OpdInsg)^{\times} \to 
((\OpdInsg)_{/\bullet}^{\op})^{\times} \times_{\Gop}
(\OpdInsg)^{\times}  \to \CatI^{\times}.\]
 By (a slight variant of) \cite{HA}*{Proposition 2.4.2.5} this corresponds to a
  monoid object $\simp_{\mathcal{S}^{\op}}^{\op,\times} \times_{\Gop}
  (\OpdInsg)^{\times} \to \CatI$. Let \[(\ofALGcat^{\otimes})^{\vee} \to (\simp_{\mathcal{S}^{\op}}^{\op,\times} \times_{\Gop}
  (\OpdInsg)^{\times})^{\op}\] be a Cartesian fibration associated to
  this functor. Then the composite
  \[ 
  \begin{split}
(\ofALGcat^{\otimes})^{\vee} & \to
  (\simp_{\mathcal{S}^{\op}}^{\op,\times} \times_{\Gop}
  (\OpdInsg)^{\times})^{\op} \\ & \to ((\Dop \times \Gop)
  \times_{\Gop} (\OpdInsg)^{\times})^{\op} \\ & \simeq (\Dop
  \times (\OpdInsg)^{\times})^{\op}
  \end{split}
\] is again a Cartesian fibration,
  and we define $\ofALGcat^{\otimes} \to \Dop \times
  (\OpdInsg)^{\times}$ to be a coCartesian fibration associated to
  this Cartesian fibration. This is a coCartesian fibration of \gnsiopds{}.
\end{construction}

\begin{construction}
  The symmetric monoidal structure on $\LCatI^{\txt{coC}}$ induces a
  tensor product on $\txt{Mon}_{\infty}^{\txt{coC}}$, and the
  forgetful functor from $\txt{Mon}_{\infty}^{\txt{coC}}$ to
  $\LOpdInsg$ can be enhanced to a lax monoidal functor
  $\txt{Mon}_{\infty}^{\txt{coC},\otimes} \to (\LOpdInsg)^{\times}$.
  Let $(\ofALGcat^{\otimes})' \to \simp^{\op} \times
  \txt{Mon}_{\infty}^{\txt{coC},\otimes}$ be defined by the pullback
  \nolabelcsquare{(\ofALGcat^{\otimes})'}{\ofALGcat^{\otimes}}{\simp^{\op}
    \times \txt{Mon}_{\infty}^{\txt{coC},\otimes}}{\Dop \times
    (\LOpdInsg)^{\times}} along this, where the right vertical map is
  the variant of the map of Construction~\ref{constr:3} where we allow
  the target \gnsiopds{} to be large. This gives a coCartesian
  fibration of \gnsiopds{}.
\end{construction}

\begin{definition}
  We define $\fALGcat^{\otimes}$ to be the full subcategory of
  $(\ofALGcat^{\otimes})'$ spanned by those objects that correspond to
  lists of objects of $\fALGcat$.
\end{definition}

\begin{proposition}\label{propn:extprodcomp}
  The restricted projection $\fALGcat^{\otimes} \to \simp^{\op} \times
  \txt{Mon}_{\infty}^{\txt{coC},\otimes}$ is a coCartesian fibration.
\end{proposition}

To see this we use the following technical observation:
\begin{lemma}\label{lem:LopXYcof}
  Let $X_{0},\ldots,X_{n}$ and $Y_{0},\ldots,Y_{n}$ be spaces, and
  suppose $(\Xi, \mathrm{H})$ is an object of $\DopXYn$ over $\xi \in
  \Dopn$. Then the map
  \[ (\LopXYn)^{\txt{act}}_{/(\Xi, \mathrm{H})} \to (\LopXn)^{\txt{act}}_{/\Xi}
  \times (\LopXn)^{\txt{act}}_{/\mathrm{H}} \]
  is cofinal.
\end{lemma}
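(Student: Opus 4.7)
The plan is to follow the pattern of cofinality proofs earlier in this section: set up a Cartesian square whose vertical maps are left fibrations, use the pullback-of-cofinal-along-coCartesian principle to reduce to the cofinality of a simpler map along the bottom, and then handle that map by combinatorics in $\bbLambda$.

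First, since the functor $\Dopn \to \mathcal{S}$ classifying $\DopXYn$ sends $(i_{0},\ldots,i_{m})$ to $\prod_{j}(X_{i_{j}}\times Y_{i_{j}})$, which is the product of the functors classifying $\DopXn$ and $\DopYn$, right Kan extension gives $\DopXYn \simeq \DopXn \times_{\Dopn} \DopYn$ as left fibrations over $\Dopn$. Pulling back along $\Lbrnop \to \Dopn$ gives $\LopXYn \simeq \LopXn \times_{\Lbrnop} \LopYn$, and because formation of active over-categories is compatible with this pullback,
\[
(\LopXYn)^{\txt{act}}_{/(\Xi, \mathrm{H})} \;\simeq\; (\LopXn)^{\txt{act}}_{/\Xi} \times_{(\Lbrnop)^{\txt{act}}_{/\xi}} (\LopYn)^{\txt{act}}_{/\mathrm{H}}.
\]
Consider then the commutative square
\[
\begin{tikzcd}
(\LopXYn)^{\txt{act}}_{/(\Xi, \mathrm{H})} \arrow{r} \arrow{d} & (\LopXn)^{\txt{act}}_{/\Xi} \times (\LopYn)^{\txt{act}}_{/\mathrm{H}} \arrow{d} \\
(\Lbrnop)^{\txt{act}}_{/\xi} \arrow{r}{\Delta} & (\Lbrnop)^{\txt{act}}_{/\xi} \times (\Lbrnop)^{\txt{act}}_{/\xi}.
\end{tikzcd}
\]
The right vertical map is a product of two left fibrations by Lemma~\ref{lem:LopXnactXifib}(i), hence itself a left fibration; the left vertical map is a left fibration by the same proof applied to $\LopXYn$, using the obvious two-variable extension of the fibre description of Lemma~\ref{lem:LopXnactXifib}(ii). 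The fibre-product identification above then shows the square is Cartesian. By \cite{HTT}*{Proposition 4.1.2.15} together with \cite{HTT}*{Remark 4.1.2.10} --- i.e. that pulling back a cofinal map along a coCartesian fibration gives a cofinal map, exactly the principle already invoked in Proposition~\ref{propn:phicellcof} --- it therefore suffices to show that the diagonal $\Delta$ is cofinal.

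The cofinality of $\Delta$ amounts to showing that $(\Lbrnop)^{\txt{act}}_{/\xi}$ is $\infty$-sifted, and this is the main technical obstacle. I expect to prove it by a combinatorial argument of the same flavour as \cite{nmorita}*{Lemma 5.7} and \cite{nmorita}*{Proposition 5.16}: given two factorizations $(\eta,\phi)$ and $(\eta',\phi')$ of $\xi$ through cellular maps, one constructs a common refinement by interleaving the underlying ``cellular paths'' from $\xi(0)$ to $\xi(m)$ in $[n]$, and checks that the resulting \icat{} of cocones in $(\Lbrnop)^{\txt{act}}_{/\xi}$ admits a filtered (hence weakly contractible) coinitial subcategory. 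Equivalently, the cofinal maps from $\Dop$-type objects into the $(\Lop_{/[k]})^{\txt{act}}_{/\gamma}$ used in \S\ref{sec:compose} (and extending easily to arbitrary $\xi$) reduce siftedness of $(\Lbrnop)^{\txt{act}}_{/\xi}$ to siftedness of $\Dop$, which is the standard fact that geometric realization commutes with finite products.
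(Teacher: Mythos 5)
Your proof is correct and follows essentially the same route as the paper: the same Cartesian square sitting over the diagonal of $(\Lbrnop)^{\txt{act}}_{/\xi}$, the same identification of the vertical maps as left fibrations via Lemma~\ref{lem:LopXnactXifib}, and the same appeal to \cite{HTT}*{Proposition 4.1.2.15} and \cite{HTT}*{Remark 4.1.2.10}. The one step you flag as ``the main technical obstacle'' --- siftedness of $(\Lbrnop)^{\txt{act}}_{/\xi}$ --- is not an obstacle at all: it is precisely \cite{nmorita}*{Lemma 5.7}, which the paper simply cites (and your second suggested reduction, via the cofinal map from $\Dop$ together with siftedness of $\Dop$, is a valid way to obtain it).
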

\begin{proof}
  We have a pullback square
  \nolabelcsquare{(\LopXYn)^{\txt{act}}_{/(\Xi,
      \mathrm{H})}}{(\LopXn)^{\txt{act}}_{/\Xi} \times
    (\LopXn)^{\txt{act}}_{/\mathrm{H}}}{(\Lopn)^{\txt{act}}_{/\xi}}{(\Lopn)^{\txt{act}}_{/\xi}
    \times (\Lopn)^{\txt{act}}_{/\xi}} where the vertical maps are
  left fibrations by Lemma~\ref{lem:LopXnactXifib}, and the bottom
  horizontal map is cofinal since the \icat{}
  $(\Lopn)^{\txt{act}}_{/\xi}$ is sifted by \cite{nmorita}*{Lemma
    5.7}. It therefore follows from \cite{HTT}*{Proposition 4.1.2.15}
  and \cite{HTT}*{Remark 4.1.2.10} that the top horizontal map is also
  cofinal.
\end{proof}

\begin{proof}[Proof of Proposition~\ref{propn:extprodcomp}]
  It suffices to show that given a pair of composite algebras $M \colon \DopXn
  \to \mathcal{V}^{\otimes}$ and $N \colon \Dop_{Y_{0},\ldots,Y_{n}}
  \to \mathcal{W}^{\otimes}$, their external product
  \[ M \boxtimes N \colon \Dop_{X_{0} \times Y_{0},\ldots, X_{n}
    \times Y_{n}} \to \mathcal{V}^{\otimes} \times_{\Dop}
  \mathcal{W}^{\otimes}\]
  is also composite. This follows from 
  Lemma~\ref{lem:LopXYcof} together with the definition of left
  operadic Kan extensions.
\end{proof}

\begin{corollary}\label{cor:ALGcatlaxmon}
  $\fALGcat^{\otimes}$ determines a lax monoidal functor
  $\txt{Mon}_{\infty}^{\txt{coC},\otimes} \to \Cat(\CatI)^{\times}$.
\end{corollary}

\begin{corollary}
  Suppose $\mathcal{O}$ is a (symmetric) \iopd{} and $\mathcal{V}$ is
  an $\mathcal{O} \otimes \mathbb{E}_{1}$-monoidal \icat{} compatible
  with small colimits. Then the double \icat{}
  $\fALGcat(\mathcal{V})$ inherits a natural
  $\mathcal{O}$-monoidal structure.
\end{corollary}
\begin{proof}
  By the universal property of the Boardman-Vogt tensor product of
  \iopds{}, we can identify an $\mathcal{O}\otimes
  \mathbb{E}_{1}$-monoidal \icat{} compatible with small colimits with
  an $\mathcal{O}$-algebra in $\txt{Mon}_{\infty}^{\txt{coC}}$. By
  Corollary~\ref{cor:ALGcatlaxmon}, this implies that the functor
  $\fALGcat$ takes $\mathcal{V}$ to an $\mathcal{O}$-algebra in double
  \icats{}, i.e. an $\mathcal{O}$-monoidal double \icat{}.
\end{proof}

Lurie's additivity theorem, \cite{HA}*{Theorem 5.1.2.2}, gives an
equivalence \[\mathbb{E}_{n}\otimes \mathbb{E}_{1} \isoto \mathbb{E}_{n+1},\] so as a
special case we have:
\begin{corollary}
  Suppose $\mathcal{V}$ is an $\mathbb{E}_{n+1}$-monoidal \icat{}
  compatible with small colimits. Then the double \icat{}
  $\fALGcat(\mathcal{V})$ inherits a natural $\mathbb{E}_{n}$-monoidal
  structure.
\end{corollary}
Taking the colimit of these equivalences as $n \to \infty$, we get an
equivalence \[\mathbb{E}_{\infty}\otimes \mathbb{E}_{1} \isoto
\mathbb{E}_{\infty},\] which similarly gives:
\begin{corollary}
  Suppose $\mathcal{V}$ is a symmetric monoidal \icat{} compatible
  with small colimits. Then the double \icat{} $\fALGcat(\mathcal{V})$
  inherits a natural symmetric monoidal structure.
\end{corollary}

 \begin{bibdiv}
 \begin{biblist}
\bib{BacardSegEnrI}{article}{
  author={Bacard, Hugo V.},
  date={2010},
  title={Segal enriched categories I},
  eprint={arXiv:1009.3673},
}

\bib{enr}{article}{
  author={Gepner, David},
  author={Haugseng, Rune},
  title={Enriched $\infty $-categories via non-symmetric $\infty $-operads},
  journal={Adv. Math.},
  volume={279},
  pages={575--716},
  eprint={arXiv:1312.3178},
  date={2015},
}

\bib{freepres}{article}{
  author={Gepner, David},
  author={Haugseng, Rune},
  author={Nikolaus, Thomas},
  title={Lax colimits and free fibrations in $\infty $-categories},
  date={2015},
  eprint={arXiv:1501.02161},
}

\bib{spans}{article}{
  author={Haugseng, Rune},
  title={Iterated spans and ``classical'' topological field theories},
  date={2014},
  eprint={arXiv:1409.0837},
}

\bib{nmorita}{article}{
  author={Haugseng, Rune},
  title={The higher {M}orita category of $E_{n}$-algebras},
  date={2014},
  eprint={arXiv:1412.8459},
}

\bib{HTT}{book}{
  author={Lurie, Jacob},
  title={Higher Topos Theory},
  series={Annals of Mathematics Studies},
  publisher={Princeton University Press},
  address={Princeton, NJ},
  date={2009},
  volume={170},
  note={Available at \url {http://math.harvard.edu/~lurie/papers/highertopoi.pdf}},
}

\bib{HA}{book}{
  author={Lurie, Jacob},
  title={Higher Algebra},
  date={2014},
  note={Available at \url {http://math.harvard.edu/~lurie/papers/higheralgebra.pdf}},
}

 \end{biblist}
 \end{bibdiv}

\end{document}